\documentclass[reqno]{amsart}

\usepackage[a4paper, margin=1.4in]{geometry}

\usepackage{mathtools,amsthm,amssymb,amsfonts,amsmath}
\usepackage{enumitem}
\usepackage{graphicx,txfonts}
\usepackage{hyperref}
\usepackage{verbatim}
\usepackage{csquotes}

\usepackage{color}
\usepackage{tikz-cd}
\usetikzlibrary{cd}
\usepackage{setspace}
\usepackage{todonotes}
\usepackage[all]{xy}
\usepackage{quiver}

\usepackage{mathrsfs}
\usepackage{hyperref}

\usepackage[capitalise]{cleveref}
\usepackage{textgreek}
\usepackage{dutchcal}
\usepackage{pifont}

\theoremstyle{plain}
\newtheorem{theorem}{Theorem}[section]
\newtheorem*{theorem*}{The Contraction Principle}
\newtheorem{corollary}[theorem]{Corollary}
\newtheorem{proposition}[theorem]{Proposition}

\theoremstyle{definition}
\newtheorem*{definition*}{Definition}
\newtheorem{definition}[theorem]{Definition}

\newtheorem{example}[theorem]{Example}

\newtheorem{remark}[theorem]{Remark}
\newtheorem*{notation*}{Notation}

 \theoremstyle{definition}
 
 \theoremstyle{remark}

 \numberwithin{equation}{section}

\newcommand{\Coz}{\mathsf{Coz}}
\newcommand{\bl}{\mathsf{BL}} 
\newcommand{\dm}{\mathsf{DM}} 

\def\s{\sigma}

\newcommand{\fB}{{\mathfrak B}}
\newcommand{\fH}{{\mathfrak H}}
\newcommand{\fO}{{\mathfrak O}}
\newcommand{\pR}{{\mathfrak{pR}}}
\newcommand{\fp}{{\mathfrak{p}}}

\newcommand{\meet}{\wedge}
\newcommand{\join}{\vee}

\newcommand{\RR}{\mathbb{R}} 
\newcommand{\Ss}{\mathsf{S}}

\newcommand{\QQ}{\mathbb{Q}}
\newcommand{\PP}{\mathbb{P}} 
\newcommand{\MM}{\mathbb{M}} 
\newcommand{\Ml}{\mathfrak{O}\mathbb{M}} 
\newcommand{\NN}{\mathbb{N}}

\newcommand{\close}{\mathfrak{c}} 
\newcommand{\open}{\mathfrak{o}} 

\definecolor{myRed}{RGB}{214,92,92}
\definecolor{myGreen}{RGB}{53,154,25}
\definecolor{myBlue}{RGB}{92,92,214}

\def\setof#1#2{\{#1 \, |\, #2\} }  
\def\set#1{\{#1 \} } 
\newcommand{\Int}{\operatorname{int}}

\title[Perfect regularity in spaces and locales]{
An overlooked weakening of perfect normality:\\ Perfect regularity in spaces and locales}

\author{Ana Bel\'en Avilez}
\address{ITAM, Instituto Tecnológico Autónomo de México}
\email{anabelen.avilez@itam.mx}
\author{Guram Bezhanishvili}
\address{New Mexico State University}
\email{guram@nmsu.edu}
\author{Joanne Walters-Wayland}
\address{CECAT, Chapman University}
\email{joanne@waylands.com}

\begin{document}
\keywords{Higher separation axioms; Pointfree topology; Dedekind-MacNeille completion; injective hull}
\subjclass[2020]{54D15; 18F70; 06D22; 06B23}

\begin{abstract}
    We introduce the notion of  perfect regularity as an appropriate weakening of perfect normality, both for spaces and locales. Various characterizations are given, using Dedekind-MacNeille completions, injective hulls, and sublocales. We place the new class of perfectly regular frames among various well-studied classes of frames. We also introduce the construction of perfect regularization of a completely regular frame,  compare it to Isbell's well-known booleanization construction, and argue that it is at least as important as the latter.
\end{abstract}
\maketitle
\tableofcontents
\section{Introduction and motivation}

Normality is one of the most significant higher separation axioms in topology. Perfect normality further strengthens this by requiring each closed set to be a G$_\delta$-set (recall that a \emph{$G_\delta$-set} is a countable intersection of open sets). By a well-known result of Vedenissoff (see, e.g., \cite[Thm.~1.5.19]{engelkingGeneralToplogy1989}), $X$ is perfectly normal iff each closed set of $X$ is a zero-set, yielding a convenient pointfree description of perfect normality: the $\sigma$-frame of cozeros coincides with the frame of opens (see, e.g., \cite[Cor.~VIII.1.2.1]{picadoSeparationPointfreeTopology2021}).

It is natural to weaken perfect normality in the same vein that regularity weakens normality. An obvious candidate is to ask singletons to be G$_\delta$. However, in general, this is not equivalent to singletons being zero-sets, as was demonstrated recently by K.P.~Hart \cite{hartInfiniteLibrary2025}. This makes the analogy with perfect normality less obvious. This discrepancy disappears when the spaces are completely regular. We thus arrive at the following:

\begin{definition}
    A completely regular space is {\em perfectly regular} provided each singleton is G$_\delta$. 
\end{definition}

As we will see, this is equivalent to each singleton being a zero-set (\cref{PR for spaces}).
The condition that each singleton is a G$_\delta$-set was first studied by Chittenden \cite{chittendenGeneralTopologyRelation1929}.  Anderson \cite{andersonLatticeCharacterizationCompletely1955} termed these $G_\delta$-spaces\footnote{Note that in \cite[p.~162]{steenCounterexamplesTopology1978} this name is reserved for the stronger property that each closed set is G$_\delta$.}, while Aull \cite{aullBaseAxiomsTopology1968} referred to them as $E_0$-spaces.
A quintessential example of a perfectly normal space is the real line (with the usual topology), whereas the Michael line plays that role for perfectly regular spaces (see \cref{Michael is PR}). 

Our aim is to study perfect regularity from the perspective of pointfree topology. As we will see, this closely relates to Dedekind-MacNeille completions, a mainstream topic in lattice theory (see, e.g., \cite{balbesDistributiveLattices1974}), as well as to the important construction of injective hulls of semilattices by Bruns and Lakser \cite{brunsInjectiveHullsSemilattices1970} (see also \cite{hornCategorySemilattices1971}).

We provide various characterizations of perfect regularity, from which it follows that a space $X$ is perfectly regular iff the frame of opens $\fO X$ is isomorphic to the Dedekind-MacNeille completion of the $\sigma$-frame of cozeros $\Coz X$, which in turn is equivalent to $\fO X$ being isomorphic to the injective hull of $\Coz X$. We also place this new class of perfectly regular frames among various well-studied classes of frames in pointfree topology. In particular, \cref{PN=cozero+PR} shows that a frame is perfectly normal precisely when it is both perfectly regular and a cozero frame (the cozeros themselves form a frame). 

This approach gives rise to the  construction of the perfect regularization of a completely regular frame $L$ --- the Bruns-Lakser sublocale of $L$ generated by $\Coz L$ --- which we study in detail in \cref{sec: Perfect regularization}. Some results about this remarkable sublocale may be found in \cite{ballLindelofTightnessDedekindMacNeille2017}\footnote{In \cite{ballLindelofTightnessDedekindMacNeille2017} it is referred to as $sl(Coz L)$.}, \cite{bezhanishviliSemilatticeBaseHierarchy2024}\footnote{In \cite{bezhanishviliSemilatticeBaseHierarchy2024}, it is denoted by $D_\infty \Coz L$.}, and \cite{bezhanishviliDegreesJoindistributivityBruns2025}. We argue that it plays at least as prominent a role as Isbell's booleanization. Among other things, we give necessary and sufficient conditions determining when the two coincide (see \cref{char of almost P}), from which it follows that they coincide precisely when the frame is an almost P-frame (aka $\omega_1$-hollow).
Our findings enrich the understanding of perfect regularity in both frames and topological spaces, as well as provide new avenues for research, which are outlined in the concluding section.

\section{Perfect regularity}

In this section we introduce our key notion of a perfectly regular frame. To motivate the definition, we first give several equivalent conditions for a space to be perfectly regular. Throughout, we assume familiarity with standard topological concepts as presented in  \cite{engelkingGeneralToplogy1989}, as well as the basics of pointfree topology as outlined in \cite{picadoFramesLocalesTopology2012}.

\begin{theorem}\label{PR for spaces}
    For a completely regular space $X$, the following are equivalent:
    \begingroup
    \begin{enumerate}[label=\textup{(\arabic*)}]
    \item\label{item:PRsp-1} $X$ is a perfectly regular space.
    \item\label{item:PRsp-2} $\set{x}$ is a zero-set for each $x\in X$.
    \item\label{item:PRsp-3} Each closed set is the union of zero-sets contained in it.
    \item\label{item:PRsp-4} The union of all the zero-sets contained in a closed set is dense in the closed set. 
    \item\label{item:PRsp-5} Each open set is the interior of the intersection of the cozero-sets containing it.
    \end{enumerate}
    \endgroup
\end{theorem}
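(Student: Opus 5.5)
I will prove the cycle $(1)\Rightarrow(2)\Rightarrow(3)\Rightarrow(4)\Rightarrow(5)\Rightarrow(1)$, working throughout in a completely regular space $X$. The one non-formal step is $(1)\Rightarrow(2)$; it uses complete regularity to convert a $G_\delta$ condition into a zero-set condition. The other steps are mostly complementation and density arguments.

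For $(1)\Rightarrow(2)$, write $\set{x}=\bigcap_n U_n$ with each $U_n$ open. By complete regularity, for each $n$ there is a continuous $f_n\colon X\to[0,1]$ with $f_n(x)=0$ and $f_n=1$ off $U_n$. Then $f=\sum_n 2^{-n}f_n$ is continuous, being a uniform limit. We have $f(x)=0$, and if $y\neq x$ then $y\notin U_n$ for some $n$, so $f(y)\geq 2^{-n}>0$. Hence $\set{x}=Z(f)$. This is the main obstacle: it is exactly where complete regularity is essential, in view of Hart's example, and one must check that the $f_n$ are chosen to be $1$ outside $U_n$, not merely nonzero there.

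For $(2)\Rightarrow(3)$, every closed set $F$ equals $\bigcup_{x\in F}\set{x}$, and each singleton is a zero-set inside $F$. For $(3)\Rightarrow(4)$, if the union of the zero-sets contained in $F$ equals $F$, it is trivially dense in $F$. For $(4)\Rightarrow(5)$, let $U$ be open, let $V=\bigcap\setof{C}{C \text{ cozero},\ U\subseteq C}$, and put $F=X\setminus U$. Complementation turns cozero-sets containing $U$ into zero-sets contained in $F$. So $X\setminus V$ is the union $Z_F$ of the zero-sets inside $F$, and $\Int V = X\setminus \overline{Z_F}$. By (4), $\overline{Z_F}\supseteq F$; since $F$ is closed and $Z_F\subseteq F$, we get $\overline{Z_F}=F$. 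Therefore $\Int V=U$.

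For $(5)\Rightarrow(1)$, I run the same complementation backwards. Condition (5) applied to $U=X\setminus\set{x}$ (open by $T_1$, which complete regularity includes in the paper's convention) gives that $\set{x}$ lies in the closure of the union of zero-sets contained in $\set{x}$. That union must therefore be nonempty, so $\set{x}$ itself is a zero-set $Z(g)$. Then $\set{x}=\bigcap_n\setof{y}{|g(y)|<1/n}$ is $G_\delta$, which is (1).

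This completes the cycle. If $T_1$ is not assumed, I would instead use the Tychonoff hypothesis implicit in the paper's usage.
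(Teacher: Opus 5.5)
Your proof is correct and follows essentially the same route as the paper: the same $\sum_n 2^{-n}f_n$ construction for $(1)\Rightarrow(2)$, the same complementation argument linking (4) and (5), and the same observation that a nonempty union of zero-sets inside $\{x\}$ forces $\{x\}$ to be a zero-set, hence $G_\delta$. The only difference is organizational: you run a single cycle and fold the paper's separate steps $(5)\Rightarrow(4)\Rightarrow(2)\Rightarrow(1)$ into one implication $(5)\Rightarrow(1)$.
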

\begin{proof}
\ref{item:PRsp-1}$\implies$\ref{item:PRsp-2}: Let $x\in X$. Since $X$ is perfectly regular, $\set{x} = \bigcap_{n=1}^\infty U_n$ for some $U_n\in\fO X$. Let $F_n = X \setminus U_n$. Then each $F_n$ is closed and $x \notin F_n$. Since $X$ is completely regular, there is a continuous $f_n : X \to [0,1]$ such that $f_n(x) = 0$ and $F_n \subseteq f_n^{-1}(1)$. Therefore, $f_n^{-1}[0,1) \subseteq U_n$. Using a standard technique, let 
\[f = \displaystyle \sum_{n=1}^\infty \frac{1}{2^n}f_n.\]
Since this series is uniformly convergent, $f :X \to [0,1]$ is  continuous. Moreover, $\set{x} = f^{-1}(0)$: if $y \ne x$ then $y \notin U_n$ for some $n \ge 1$, so $f_n(y) = 1$, and hence $f(y)>0$. Thus, $\set{x}$ is a zero-set of $X$.
    
    \ref{item:PRsp-2}$\implies$\ref{item:PRsp-3} and \ref{item:PRsp-3}$\implies$\ref{item:PRsp-4} are obvious.

    \ref{item:PRsp-4}$\implies$\ref{item:PRsp-2}: Let $x \in X$ and consider the set $\set{x}$, which is closed (since $X$ is $T_1$). By \ref{item:PRsp-4}, the union of all the zero-sets contained in $\set{x}$ is dense in $\set{x}$. Thus, the union is nonempty, hence equal to $\set{x}$.
     
     \ref{item:PRsp-2}$\implies$\ref{item:PRsp-1}: This is obvious since zero-sets are $G_\delta$-sets (see, e.g., \cite[p.~15]{gillmanRingsContinuousFunctions1960}).

     \ref{item:PRsp-4}$\iff$\ref{item:PRsp-5}: Condition \ref{item:PRsp-4} is equivalent to each closed set being the closure of the union of zero-sets contained in it. By going to complements, this is equivalent to each open set being the interior of the intersection of the cozero-sets containing it.
    \end{proof}
Since meets in the frame $\fO X$ are calculated as interiors of  intersections, the last condition of \cref{PR for spaces} means that $\Coz X$ meet-generates $\fO X$. Also, $X$ being completely regular amounts to $\Coz X$ join-generating $\fO X$. This leads to our key definition: 

\begin{definition}\label{defn: PR}
    A frame $L$ is {\em perfectly regular} if $\Coz L$ join and meet generates $L$. 
\end{definition}

As an immediate consequence, we obtain:

\begin{theorem}\label{thm: spatial PR}
    For any topological space $X$,  $\fO X$ is a perfectly regular frame iff $X$ is a perfectly regular space.
\end{theorem}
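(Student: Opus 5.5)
The plan is to reduce everything to \cref{PR for spaces}, using the remark after its proof that meets in $\fO X$ are interiors of intersections. Throughout I take the standing convention that spaces are $T_0$. This is implicit in the paper's use of ``completely regular'', which includes $T_1$: the proof of \ref{item:PRsp-4}$\implies$\ref{item:PRsp-2} uses that singletons are closed. The convention is genuinely needed, since for the two-point indiscrete space $\fO X=\set{\emptyset,X}=\Coz X$, yet singletons are not $G_\delta$.

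First I will check that $\Coz X$ join-generates $\fO X$ iff $X$ is completely regular. The forward direction is the key one. Suppose every open set is a union of cozero-sets, and let $x\in U$ with $U$ open. Choose a cozero-set $C=f^{-1}(\RR\setminus\set{0})$ with $x\in C\subseteq U$. Rescaling $f$ gives a continuous $g\colon X\to[0,1]$ with $g(x)=1$ and $g=0$ off $C$, hence off $U$. This is the function-separation half of complete regularity. The converse is the standard fact that the cozero-sets of a completely regular space form a base.

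For the $T_1$ part, I note that a space with this separation property in which $\overline{\set{x}}\ne\overline{\set{y}}$ for distinct points is $T_1$. Indeed, if $y\notin\overline{\set{x}}$, separate $y$ from the closed set $\overline{\set{x}}$ by a function. Then $x$ and $y$ are separated symmetrically, so $x\notin\overline{\set{y}}$ as well. Hence, under the $T_0$ convention, $\Coz X$ join-generating $\fO X$ is equivalent to $X$ being completely regular in the paper's sense.

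Next I will check that $\Coz X$ meet-generates $\fO X$ iff condition \ref{item:PRsp-5} of \cref{PR for spaces} holds. In $\fO X$, the meet of a family of cozero-sets is the interior of their intersection. So $U$ is the meet of the cozero-sets above it iff $U=\Int\bigcap\setof{C\in\Coz X}{U\subseteq C}$. Moreover, if $U$ is any meet of cozero-sets, it is already the meet of all cozero-sets containing it, so the two formulations agree.

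Combining the two steps: $\fO X$ is perfectly regular iff $X$ is completely regular and satisfies \ref{item:PRsp-5}. By \cref{PR for spaces}, the latter holds iff $X$ is perfectly regular.

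The only real obstacle is the separation-axiom bookkeeping in the first step. The frame $\fO X$ cannot see $T_0$, so I will state explicitly that spaces are assumed $T_0$, which makes complete regularity of the frame match that of the space. The rest is a direct translation.
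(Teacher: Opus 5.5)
Your argument is correct and is essentially the paper's own: the paper gets the result immediately from \cref{PR for spaces} by observing that $\Coz X$ join-generating $\fO X$ amounts to complete regularity, and that $\Coz X$ meet-generating $\fO X$ is exactly condition~\ref{item:PRsp-5}, because meets in $\fO X$ are interiors of intersections. Your explicit $T_0$ assumption is a valid refinement that the paper's phrasing ``for any topological space'' leaves implicit: the two-point indiscrete space has $\fO X=\Coz X$ but is not a perfectly regular space, and your symmetry argument correctly supplies the $T_1$ part of complete regularity that \cref{PR for spaces} needs.
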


Recall that a frame $L$ is {\em perfectly normal} provided $\Coz L = L$ (see, e.g., \cite[Cor.~VIII.1.2.1]{picadoSeparationPointfreeTopology2021}). Perfect regularity is stronger than complete regularity but weaker than perfect normality. An example of a completely regular frame that is not perfectly regular is given in \cref{rem: CR not PR}. 
It is based on the Lindel\"ofication of the frame of opens of the Michael line, the latter being our quintessential example of a perfectly regular frame that is not perfectly normal (see \cref{Michael is PR}). 


\begin{definition}
    The {\em Michael line} $\MM$ is the space whose points are those of $\RR$ and whose open sets  are of the form  
\[\setof{U\cup K}{U\mbox{ open in } \RR \mbox{ and } K\subseteq \PP},\] where $\PP$ is the set of irrational numbers. Let $\Ml$ denote the frame of opens of $\MM$.
\end{definition}
 
\begin{remark}\
\begin{enumerate}[label=\textup{(\arabic*)}]
    \item It is immediate from definition that the topology of $\MM$ is finer than that of $\RR$, that each irrational number is isolated, and that $\PP$ is open, so $\QQ$ is closed in $\MM$.
     \item 
In \cite[5.1.32]{engelkingGeneralToplogy1989}, the topology of the Michael line is denoted by $\RR_\QQ$. In  \cite[5C]{willardGeneralTopology1970}, the Michael line is called the {\em scattered line} and denoted by $\mathbf{S}$. Ma's blog \cite{maMichaelLineBasics2012} contains useful information about the Michael line.
\end{enumerate}
\end{remark}

\begin{theorem}    
\label{Michael is PR}
    $\Ml$ is perfectly regular but not perfectly normal.
\end{theorem}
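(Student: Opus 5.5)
By \cref{thm: spatial PR}, perfect regularity of the frame $\Ml$ is the same as perfect regularity of the space $\MM$. So I will check that $\MM$ is completely regular and that every singleton is a G$_\delta$-set. For non-perfect-normality I recall that $\Ml$ is perfectly normal iff $\Coz\MM=\Ml$, i.e., every open set of $\MM$ is a cozero-set. Equivalently, every closed set is a zero-set, and in particular every closed set is G$_\delta$. So it suffices to exhibit one closed subset of $\MM$ that is not G$_\delta$; the natural candidate is $\QQ$.

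\emph{Complete regularity.} I will use the standard fact that $\MM$ is Hausdorff and even (hereditarily) paracompact, hence normal, hence completely regular. For a self-contained argument I would avoid paracompactness and give explicit Urysohn functions.
\begin{itemize}
\item At an irrational point $p$, the set $\set{p}$ is clopen, so its characteristic function is continuous and separates $p$ from any closed set missing it.
\item At a rational point $q$ with $q\notin F$, $F$ closed, a basic neighbourhood of $q$ is a Euclidean interval $(q-\varepsilon,q+\varepsilon)$ missing $F$. A Euclidean continuous $f:\RR\to[0,1]$ with $f(q)=0$ and $f=1$ off this interval remains continuous on $\MM$, because the topology of $\MM$ is finer.
\end{itemize}

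\emph{Singletons are G$_\delta$.} For irrational $p$ the singleton $\set{p}$ is itself open. For rational $q$ we have $\set{q}=\bigcap_n (q-1/n,q+1/n)$, which is G$_\delta$ already in $\RR$, and Euclidean opens are open in $\MM$. Hence $\MM$ is perfectly regular, and so is $\Ml$.

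\emph{$\QQ$ is not G$_\delta$ in $\MM$.} Suppose $\QQ=\bigcap_n W_n$ with each $W_n$ open in $\MM$. Each $W_n$ has the form $U_n\cup K_n$ with $U_n$ Euclidean open and $K_n\subseteq\PP$. Since $\QQ\subseteq W_n$ and $K_n$ contains no rationals, we get $\QQ\subseteq U_n$. Then
\[
\QQ\subseteq \bigcap_n U_n\subseteq \bigcap_n W_n=\QQ,
\]
so $\QQ$ would be a G$_\delta$ subset of the real line. This contradicts the Baire category theorem: $\bigcap_n U_n$ would be a dense G$_\delta$ in $\RR$, and so would $\bigcap_{q\in\QQ}(\RR\setminus\set{q})$; their intersection is empty, which is impossible.

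The main obstacle is this reduction, namely that a G$_\delta$ in $\MM$ containing $\QQ$ yields a Euclidean G$_\delta$ containing $\QQ$. Once it is in place, the rest is routine. Consequently $\QQ$ is a closed set of $\MM$ that is not a zero-set, so $\PP$ is an open set that is not a cozero-set, $\Coz\MM\neq\Ml$, and $\Ml$ is not perfectly normal.
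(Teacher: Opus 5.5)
Your proof is correct, but it takes a different route from the paper's.

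For perfect regularity, the paper works directly with \cref{defn: PR}. It shows that each $U\cup\{k\}$ with $k\in\PP$ is a cozero, so cozeros join-generate $\Ml$. It then writes an open set as the intersection of the Euclidean cozeros $\RR\setminus\{x\}$ over the points $x$ it omits; this intersection is a meet in $\Ml$ because it is already open. You instead pass through \cref{thm: spatial PR} and \cref{PR for spaces}, so all you need is complete regularity (your explicit Urysohn functions) and the trivial fact that singletons are G$_\delta$. The two arguments are closely parallel: the paper's meet-generators $\RR\setminus\{x\}$ are exactly the complements of the zero-sets $\{x\}$ that \cref{PR for spaces} produces from your G$_\delta$ singletons. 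Yours reuses the spatial characterization already established, whereas the paper's is a self-contained pointfree verification.

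For non-perfect-normality, the paper only cites Nyikos, while you give a complete argument. You show that $\QQ$ is closed in $\MM$ and that $\QQ=\bigcap_n W_n$ in $\MM$ would force $\QQ=\bigcap_n U_n$ in $\RR$. Baire then rules this out; it is the same Baire argument the paper itself uses later in \cref{rem: failure of intersection}. This makes the example self-contained and exhibits $\PP$ as an explicit open set of $\MM$ that is not a cozero.
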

\begin{proof} Since $\MM$ is not a perfectly normal space (see \cite[Ex.~1] {nyikosStructureZerodimensionalSpaces1975}), $\Ml$ is not a perfectly normal frame. Towards showing that $\Ml$ is perfectly regular, first observe that if $U$ is open in $\RR$ and $k \in \PP$, then $U\cup\set{k}$ is a cozero set of $\MM$ since $U$ is a cozero set of $\MM$ because it is a cozero set of $\RR$ ($\RR$ is a coarser topology and all opens are cozeros in $\RR$), and $\set{k}$ is clopen in $\MM$, so it too is cozero. Therefore, if 
    $K\subseteq \PP$ is countable then $U \cup K= \bigcup_{k\in K} (U\cup \set{k})$ is a countable union of cozero sets, hence is cozero.\footnote{Note that not all cozero sets of $\MM$ are of this form. For example, build a Cantor-type set $C$ within the irrationals. Then $C$ is clopen in $\MM$, hence cozero, but not of the form $U \cup K$ for countable $K$.}

    Next we show that $\Ml$ is completely regular. Let $U\cup K\in \Ml$. Then 
    $U\cup K=\bigcup_{k\in K} (U\cup \set{k})$ 
    and every $U\cup \set{k}$ is cozero by the previous paragraph. Thus, cozero elements join-generate $\Ml$.
    
    Finally, we show that the cozeros of $\Ml$ meet-generate $\Ml$. Let $U\cup K\in \Ml$. If $K$ is countable, then $U\cup K\in \Coz\Ml$. Thus, it suffices to show that $K$ is meet-generated by $\Coz \Ml$ when $K$ is uncountable. Clearly, 
    $K=\bigcap_{x\notin K} (\RR\setminus \set{x})$.
    This intersection is in fact a meet in $\Ml$ since $K$ is open in $\MM$. Moreover, every $\RR\setminus \set{x}$ is open in $\RR$, so $\RR\setminus \set{x}\in \Coz\fO\RR \subseteq \Coz\Ml$.
    Thus, cozero elements meet-generate $\Ml$. Consequently, $\Ml$ is perfectly regular.
\end{proof}

We conclude this section by  establishing our first characterization of perfectly regular frames. For this we need to remind the reader of Dedekind-MacNeille and Bruns-Lakser constructions. The {\em Dedekind-MacNeille completion} (aka normal completion) of a poset $A$, denoted $\dm (A)$, may be characterized as being the only, up to isomorphism, complete lattice in which $A$ is both join and meet-generating (see, e.g., \cite[Thm.~XII.2.7]{balbesDistributiveLattices1974}). 
The {\em Bruns-Lakser completion} of a meet-semilattice $A$, denoted $\bl (A)$, is the injective hull of $A$ in the category of meet-semilattices \cite{brunsInjectiveHullsSemilattices1970}. and it can be constructed as the frame of D-ideals of $A$. 
In the next characterization, we use that 
a frame $L$ is completely regular precisely when $\Coz L$ join-generates $L$ (hence our need to assume that $L$ is completely regular). From now on, the role of the meet-semilattice $A$ join-generating $L$ will always be played by $\Coz L$. 


\begin{theorem}\label{prop: char of PR}
    For a frame $L$, the following are equivalent:
    \begin{enumerate}[label=\upshape(\arabic*), ref=\arabic*]
        \item \label{item:PR-reg} $L$ is perfectly regular.
        \item \label{item:PR-dm}  $L \cong \dm(\Coz L)$.
        \item \label{item:PR-bl}  $L \cong \bl(\Coz L)$.
     \end{enumerate}
\end{theorem}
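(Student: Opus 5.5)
The plan is to prove (1)$\iff$(2) directly from the characterization of Dedekind--MacNeille completions, and then to prove (2)$\iff$(3) by showing that, for $A=\Coz L$, the completions $\dm(A)$ and $\bl(A)$ coincide whenever either of them is isomorphic to $L$ (over $A$).

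For (1)$\iff$(2), recall that $\dm(A)$ is characterized up to isomorphism as the unique complete lattice in which $A$ is both join- and meet-dense. If $L$ is perfectly regular, then by \cref{defn: PR} the subset $\Coz L$ join- and meet-generates the complete lattice $L$, so $L\cong\dm(\Coz L)$ by this uniqueness. Conversely, assume $L\cong\dm(\Coz L)$, where the isomorphism extends the canonical embedding of $\Coz L$. Then $\Coz L$ is join- and meet-dense in $L$, so $L$ is perfectly regular. The one subtlety is that an abstract isomorphism $L\cong\dm(\Coz L)$ must be read as respecting the copy of $\Coz L$. I will state that the isomorphisms in (2) and (3) are understood over $\Coz L$, with $\Coz L$ embedded in $L$ in the usual way.

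For (2)$\iff$(3), I will use the Bruns--Lakser description: $\bl(A)$ is the frame of D-ideals of $A$. It is the largest frame into which $A$ embeds join-densely while preserving the existing distributive joins. Every element of $\bl(A)$ is a join of elements of $A$, and $A$ is meet-dense in $\bl(A)$, because by injectivity-hull essentiality every D-ideal is an intersection of principal ideals. Hence $\bl(A)$ is a complete lattice in which $A$ is join- and meet-dense, so $\bl(A)\cong\dm(A)$ always holds when $\dm(A)$ is distributive in the right way. Rather than rely on that, I will argue directly. Suppose first that (3) holds. Then $L$ is a complete lattice in which $\Coz L$ is join-dense, since it generates. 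Meet-density follows because D-ideals are exactly the intersections of principal downsets: the essential-extension property of the injective hull forces any element to be the meet of the elements of $A$ above it.

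Suppose instead that (2) holds, so $\Coz L$ is join- and meet-dense in the frame $L$. Then the map $a\mapsto{\downarrow}a\cap\Coz L$ identifies each element of $L$ with a D-ideal. Indeed, a down-set closed under those joins of $\Coz L$ that are distributive in $\Coz L$ is closed under the corresponding joins in $L$, and meet-density lets us recover every D-ideal as a principal one. This yields $L\cong\bl(\Coz L)$.

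I expect the main obstacle to be the meet-density step for $\bl$, that is, showing that every D-ideal $I$ of $\Coz L$ equals the set of cozeros below $\bigcap\{{\downarrow}c : I\subseteq{\downarrow}c\}$, equivalently identifying $\bl(A)$ with $\dm(A)$ when the latter is a frame. My first attempt will be to use that $A$ embeds meet-densely in its injective hull, as a standard property of essential extensions of meet-semilattices. Failing that, I will compare both constructions inside $L$ via the closure operators they induce on down-sets of $\Coz L$. The comparison rests on the fact that, in a frame, the joins existing in $\Coz L$ that are computed as in $L$ are exactly the distributive ones. That fact holds because $\Coz L$ is a sub-$\sigma$-frame and join-generates $L$.
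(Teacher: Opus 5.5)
Your argument for (1)$\iff$(2) is the paper's. Your direct route for (2)$\implies$(3) can be completed, though the two roles are swapped. Meet-density of $\Coz L$ is what makes ${\downarrow}a\cap\Coz L$ a D-ideal. Frame distributivity of $L$ is what shows every D-ideal $I$ equals ${\downarrow}(\bigvee I)\cap\Coz L$: for a cozero $c\le\bigvee I$, the join $c=\bigvee_{i\in I}(c\wedge i)$ is computed in $L$, hence is distributive in $\Coz L$.

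The genuine gap is (3)$\implies$(2). You claim $A$ is meet-dense in $\bl(A)$ because injective hulls are essential extensions. This is false for meet-semilattices in general, even for $\sigma$-frames. Let $A$ be the $\sigma$-frame of countable subsets of an uncountable set $X$, together with $X$. Any family with uncountable union has join $X$ in $A$, and that join is distributive only if the union is all of $X$. Hence the D-ideals are exactly the sets $\{C\in A : C\subseteq Y\}$ for $Y\subseteq X$, and $\bl(A)\cong\mathcal P(X)$. For uncountable $Y\neq X$, the only upper bound of this D-ideal in $A$ is $X$, so it is not an intersection of principal ideals; indeed $\dm(A)\cong A$, since $A$ is already complete. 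Being an essential extension does not force meet-density.

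Your fallback does not close the gap either. Joins of cozeros computed in $L$ are distributive in $\Coz L$, but the converse fails in general. For instance, in $\lambda\Ml$ the singletons of a Cantor set $C\subseteq\PP$ have distributive join $C$ in the cozeros, yet their join in $\lambda\Ml$ is the $\sigma$-ideal of countable subsets of $C$. Even granting the converse under (3), it only says the D-ideals are the sets ${\downarrow}a\cap\Coz L$, not that they are normal ideals.

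The missing ingredient is the one the paper uses. $\Coz L$ is a \emph{regular} $\sigma$-frame, and for regular $\sigma$-frames every D-ideal is normal, so $\bl(\Coz L)\cong\dm(\Coz L)$ (Ball--Picado--Walters-Wayland, Prop.~7.7). The example above escapes precisely because it is not regular: only $\varnothing$ is rather below a nonempty countable set.

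Here is a direct argument. Let $I$ be a D-ideal of a regular $\sigma$-frame $A$, and let $c$ lie below every upper bound of $I$.
\begin{enumerate}
\item Take $x\prec c$ via $y$, so $x\wedge y=0$ and $y\vee c=1$. Every upper bound of $\{i\vee y : i\in I\}$ is then $\ge c\vee y=1$.
\item This join is distributive. Let $b\in A$, let $w\prec b$ via $z$, and let $v$ be an upper bound of all $b\wedge(i\vee y)$. Then $i\vee y\le z\vee v$ for all $i$, so $v\vee z=1$ and hence $w\le v$. Since $b$ is a countable join of such $w$, we get $b\le v$.
\item Therefore $I\vee{\downarrow}y$ is the top of $\bl(A)$. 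Distributivity of $\bl(A)$ gives ${\downarrow}x={\downarrow}x\wedge(I\vee{\downarrow}y)={\downarrow}x\cap I$, so $x\in I$.
\item Finally $c$ is a countable join of such $x$, so $c\in I$.
\end{enumerate}
Some argument of this kind, exploiting regularity of $\Coz L$, is needed to establish (3)$\implies$(1).
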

\begin{proof}
 (\ref{item:PR-reg})$\iff$(\ref{item:PR-dm}): By definition, $L$ is perfectly regular provided $\Coz L$ join and meet-generates $L$. Since $L$ is a complete lattice, this is equivalent to $L$ being isomorphic to $\dm(\Coz L)$.
 
 (\ref{item:PR-dm})$\iff$(\ref{item:PR-bl}): By \cite[Prop.~5]{banaschewskiSigmaFrames1980}, $\Coz L$ is a regular $\s$-frame. Thus, by \cite[Prop.~7.7]{ballLindelofTightnessDedekindMacNeille2017},
       $\bl(\Coz L) \cong \dm (\Coz L)$.
\end{proof}

\section{Perfect regularization}\label{sec: Perfect regularization}

We now use the Bruns-Lakser (equivalently, Dedekind-MacNeille) completion to associate with each completely regular frame a canonical perfectly regular frame. This procedure is akin to booleanization, and we term it {\em perfect regularization}. As we pointed out in the proof of \cref{prop: char of PR}, $\bl(\Coz L)\cong \dm(\Coz L)$. By \cite[Thm.~7.1, Prop.~7.7]{ballLindelofTightnessDedekindMacNeille2017}, both these frames are isomorphic to the smallest sublocale of $L$ containing $\Coz L$. We will denote this sublocale by $\pR L$.
The nucleus $\fp:L \to L$ associated with $\pR L$ is given, for each $a \in L$, by
\[
\fp(a) = \bigwedge\setof{c\in \Coz L}{a \leq c}.\]
The following is a quick consequence of \cref{prop: char of PR}.

\begin{theorem}\label{thm: L = pR L}
    A frame $L$ is perfectly regular iff $L = \pR L$.
\end{theorem}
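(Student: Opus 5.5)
Since $\pR L$ is a sublocale of $L$ with nucleus $\fp$, the equality $L = \pR L$ amounts to $\fp$ being the identity nucleus, i.e.\ $\fp(a) = a$ for every $a \in L$. The plan is to show directly that this identity condition is exactly the meet-generation half of \cref{defn: PR}. I will also note that the join-generation half is automatic in the present context: $\fp$, and hence $\pR L$, is only defined for completely regular $L$, where $\Coz L$ join-generates $L$.

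For the forward direction, assume $L$ is perfectly regular. Then $\Coz L$ meet-generates $L$, so each $a \in L$ can be written as $a = \bigwedge S$ for some $S \subseteq \Coz L$. Every $c \in S$ satisfies $a \le c$, so $S \subseteq \setof{c \in \Coz L}{a \le c}$. Consequently
\[
\fp(a) = \bigwedge\setof{c\in \Coz L}{a \le c} \le \bigwedge S = a.
\]
The reverse inequality $a \le \fp(a)$ holds because $\fp$ is a nucleus, and it is also clear from the formula. Hence $\fp = \mathrm{id}$, and therefore $\pR L = \fp[L] = L$.

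For the converse, assume $L = \pR L$. Then every $a \in L$ is a fixed point of $\fp$, so
\[
a = \fp(a) = \bigwedge\setof{c\in \Coz L}{a \le c}.
\]
This exhibits $a$ as a meet of cozero elements, so $\Coz L$ meet-generates $L$. Together with join-generation, which follows from complete regularity, $L$ is perfectly regular.

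An alternative route is through \cref{prop: char of PR}: $\pR L \cong \dm(\Coz L)$, so $L = \pR L$ gives $L \cong \dm(\Coz L)$. This route has to take care that the isomorphism is the canonical one fixing $\Coz L$, which is exactly why the direct nucleus argument is cleaner. The only subtle point I expect is bookkeeping, namely making sure that $\Coz L$ computed in $L$ is the same set of elements that is used to define $\fp$. Since $\fp$ is defined using $\Coz L$ of $L$ itself, this holds, and no real obstacle remains.
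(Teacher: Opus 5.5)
Your argument is correct. For the implication from perfect regularity to $L = \pR L$ it is the paper's argument: meet-generation gives $a = \bigwedge\setof{c\in\Coz L}{a\le c} = \fp(a)$ for every $a$.

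The two routes differ in the other implication. The paper gets perfect regularity from $L = \pR L$ by invoking \cref{prop: char of PR} via $\pR L \cong \bl(\Coz L)\cong\dm(\Coz L)$. You instead read meet-generation directly off the fact that $\pR L$ is the fixed-point set of $\fp$. Your version therefore derives the whole theorem from one imported fact, namely that $\fp$ is the nucleus of $\pR L$, which the paper's other direction uses anyway. It needs no detour through the Bruns--Lakser or Dedekind--MacNeille completions.

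Your route also makes explicit two things the paper's one-line argument leaves implicit. First, the identification $\pR L\cong\dm(\Coz L)$ must be the canonical one fixing $\Coz L$ for \cref{prop: char of PR} to give meet-density of $\Coz L$ in $L$. Second, join-generation comes from the standing complete-regularity assumption. The statement says ``a frame'', but both the nucleus formula for $\fp$ and the identification of $\pR L$ with $\dm(\Coz L)$ need $\Coz L$ to be join-dense.

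In exchange, the paper's route keeps the result tied to the completion-theoretic picture of \cref{prop: char of PR}, which is the viewpoint the surrounding results rely on.
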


\begin{proof}
    If $L = \pR L$, then $L$ is perfectly regular by \cref{prop: char of PR} since $\pR L \cong \bl(\Coz L) \cong \dm(\Coz L)$. For the converse, let $a \in L$. Since $L$ is perfectly regular, $a = \bigwedge \setof{c \in \Coz L}{a \le c}$, so $a = \fp (a)$, and hence $a \in \pR L$.
\end{proof}

Since $0_L \in \Coz L$, $\pR L$ is a dense sublocale of $L$. Thus, the {\em booleanization} 
\[\fB L := \setof{x^{**}}{x \in L}\] is always a sublocale of $\pR L$ since $\fB L$ is the smallest dense sublocale of $L$ (Isbell's density theorem; see, e.g., \cite[III.8.3]{picadoFramesLocalesTopology2012}).

\begin{proposition}\label{BL is pr}
    For any completely regular frame $L$, $\pR L$ is perfectly regular.
\end{proposition}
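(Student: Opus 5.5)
Let $M=\pR L$; by \cref{prop: char of PR} it suffices to show that $\Coz M$ both join- and meet-generates $M$. The plan is to compare $\Coz M$ with $\Coz L$. The inclusion $M\subseteq L$ is a sublocale, so the nucleus $\fp:L\to M$ is a surjective frame homomorphism. Frame homomorphisms preserve cozero elements, so $\fp[\Coz L]\subseteq \Coz M$. Moreover, every $c\in\Coz L$ satisfies $\fp(c)=c$, since $c$ is itself among the cozeros above $c$. Hence $\Coz L\subseteq \Coz M$. Here $\Coz L$ is viewed as a subset of $M$ via the inclusion, which is legitimate because $\Coz L\subseteq \pR L$ by construction.

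Meet-generation is then immediate. For $a\in M$ we have $a=\fp(a)=\bigwedge\setof{c\in\Coz L}{a\le c}$. Meets in a sublocale are computed as in $L$, so this is also a meet in $M$ of elements of $\Coz L\subseteq\Coz M$. Thus $\Coz M$ meet-generates $M$.

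For join-generation, take $a\in M$. Since $L$ is completely regular, $a=\bigvee_L\setof{c\in\Coz L}{c\le a}$. Joins in $M$ are $\fp$ of joins in $L$, so
\[
\bigvee_M\setof{c\in\Coz L}{c\le a}=\fp\Bigl(\bigvee_L\setof{c\in\Coz L}{c\le a}\Bigr)=\fp(a)=a .
\]
Thus $\Coz L$, and hence $\Coz M$, join-generates $M$, and $M$ is perfectly regular by \cref{defn: PR}. (Alternatively, $\fp$ carries a completely below relation $c\prec\!\prec a$ in $L$ to the same relation in $M$, which shows directly that $M$ is completely regular.)

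The main obstacle is the step $\Coz L\subseteq\Coz M$. It requires that frame homomorphisms send cozero elements to cozero elements, which holds because they preserve the countable joins and the relation $\prec\!\prec$ (equivalently, because they are compatible with composing with $\mathfrak{O}\RR$). It also requires the observation that cozeros of $L$ are fixed by $\fp$. Once these are in place, the rest is bookkeeping about how meets and joins are computed in a sublocale.
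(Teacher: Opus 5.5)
Your proof is correct. Its key step is the same as the paper's: the inclusion $\Coz L\subseteq\Coz M$. The paper takes this from the general fact $\Coz L\cap S\subseteq\Coz S$ for sublocales $S$. You prove it directly: the quotient $\fp\colon L\to M$ is a frame map, so it preserves cozeros, and it fixes every cozero of $L$.

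Where you differ is in how you finish. The paper argues by minimality. Since $\Coz L\subseteq\Coz M\subseteq\pR M$, and $\pR M$ is a sublocale of $M$ and hence of $L$, the least sublocale $M$ of $L$ containing $\Coz L$ satisfies $M\subseteq\pR M$. Then $M=\pR M$ yields perfect regularity via \cref{thm: L = pR L}.

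You instead verify \cref{defn: PR} by hand. Meet-generation comes from $a=\fp(a)=\bigwedge\setof{c\in\Coz L}{a\le c}$, with meets in $M$ computed as in $L$. Join-generation comes from $\bigvee_M=\fp\circ\bigvee_L$ together with complete regularity of $L$.

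The paper's argument stays at the level of sublocales and minimality. It delegates the lattice-theoretic content to \cref{thm: L = pR L}, and thus to the identification $\pR M\cong\dm(\Coz M)$. Yours unpacks that content directly, needing only the formula for $\fp$ and the standard computation of meets and joins in a sublocale. The formula for $\fp$ is itself the cited input from Ball et al., namely that $\pR L$ consists exactly of meets of cozeros. Your route also shows explicitly that $M$ is completely regular, which the paper uses tacitly when it reduces to $M=\pR M$.
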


\begin{proof}
Set $M = \pR L$. 
    By \cref{prop: char of PR}, it is sufficient to show that 
    $M=\pR M$. The reverse inclusion is always true since $\pR M$ is always a sublocale of $M$.
    For the forward inclusion, recall that for any sublocale $S$ of $L$, we have $\Coz L \cap S \subseteq \Coz S$ (see, e.g., \cite[p.~354]{ballLindelofTightnessDedekindMacNeille2017}). Therefore, since 
    $\Coz L \subseteq \pR L=M$,  
    \[
    \Coz L = \Coz L \cap M \subseteq \Coz M \subseteq \pR M.
    \]
   Thus, both $\pR M$ and $M$ are sublocales of $L$ containing $\Coz L$. Since $M$ is the least sublocale of $L$ containing $\Coz L$, $M\subseteq \pR M$.
    \end{proof}

This result justifies the following:

\begin{definition}
    For a completely regular frame $L$, the sublocale $\pR L$ is called the \emph{perfect regularization} of $L$.
\end{definition}
 
For the next result, we recall that  regular Lindel\"of frames are a coreflective subcategory of the category of completely regular frames \cite{maddenLindelofLocalesRealcompactness1986}. By contrast, regular Lindel\"of spaces do not form a reflective subcategory of completely regular spaces.\footnote{This discrepancy can be explained by the fact that the frame of opens $\fO(\prod X_i)$ of the product of a family of topological spaces differs from the coproduct (see, e.g., $\coprod \fO X_i$ \cite[IV.5.4]{picadoFramesLocalesTopology2012}).}
Recalling that a $\sigma$-ideal of $\Coz L$ is an ideal closed under countable joins, the {\em Lindel\"of coreflection} $\lambda L$ of $L$ is given by the frame of $\sigma$-ideals of $\Coz L$, with the coreflection map given by join (see, e.g., \cite[Sec.~3]{maddenLindelofLocalesRealcompactness1986}). By associating $L$ with its image under the right adjoint of this map, $L$ may be thought of as a sublocale of $\lambda L$. 
Note that $\pR L$ is the smallest frame respecting the existing ``well behaved'' joins in $\Coz L$ \cite{brunsInjectiveHullsSemilattices1970}, whereas $\lambda L$ ``freely'' adds all joins \cite{banaschewskiFrameEnvelopeSframe1993}. Only in special cases will both constructions coincide, see \cref{BL Coz is Lind} below. By \cite[Thm.~5.5]{ballLindelofTightnessDedekindMacNeille2017}, the Lindel\"of coreflection has the same cozeros as a given frame (thought of as a sublocale of it). We thus obtain:
 
\begin{proposition}\label{Lind has same PR} A completely regular frame and its Lindel\"of coreflection have the same perfect regularization.
\end{proposition}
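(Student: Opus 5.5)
The plan is to regard $L$ as a sublocale of $\lambda L$ through the right adjoint of the coreflection map $\lambda L \to L$, and then show that $\pR L = \pR(\lambda L)$ as sublocales of $\lambda L$. Both sides are then literally the same sublocale, which gives the stated result (up to the identification of $L$ with its image). The key input is \cite[Thm.~5.5]{ballLindelofTightnessDedekindMacNeille2017}: under this identification $\Coz(\lambda L) = \Coz L$, meaning that the cozero elements of $\lambda L$ are exactly the images of the cozero elements of $L$, and so $\Coz(\lambda L) \subseteq L$.

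The first step is to show that $\pR(\lambda L)$ is contained in $L$. By definition, $\pR(\lambda L)$ is the smallest sublocale of $\lambda L$ containing $\Coz(\lambda L)$. Since $\Coz(\lambda L) = \Coz L \subseteq L$ and $L$ is a sublocale of $\lambda L$, minimality gives $\pR(\lambda L) \subseteq L$.

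The second step is to compare the two minimality conditions. Sublocales of $L$ are precisely the sublocales of $\lambda L$ contained in $L$, since a subset of $L$ closed under arbitrary meets and under $x \to s$ for $x \in \lambda L$ is the same as one closed under meets and under $x \to s$ for $x \in L$; this holds because implication in a sublocale is the restriction of implication in the ambient frame. So $\pR L$, the least sublocale of $L$ containing $\Coz L$, is the least sublocale of $\lambda L$ contained in $L$ and containing $\Coz L$. Since the least sublocale of $\lambda L$ containing $\Coz L = \Coz(\lambda L)$ already lies in $L$ by the first step, the two coincide, and $\pR L = \pR(\lambda L)$.

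Equivalently, I could phrase this via nuclei. For $a \in L$ the nucleus is $\fp_L(a) = \bigwedge\setof{c\in\Coz L}{a\le c}$, and meets of elements of the sublocale $L$ agree with meets in $\lambda L$; the images of the two nuclei then agree. The main obstacle is bookkeeping rather than mathematics. One has to check carefully that the identification in \cite[Thm.~5.5]{ballLindelofTightnessDedekindMacNeille2017} really is $\Coz(\lambda L) = \Coz L$ as subsets of $\lambda L$ via the right adjoint, and not merely an abstract $\sigma$-frame isomorphism. Once that is confirmed, the rest is the formal minimality argument above. As a fallback, \cref{prop: char of PR} gives $\pR L \cong \dm(\Coz L) \cong \dm(\Coz\lambda L) \cong \pR(\lambda L)$, which yields the result up to isomorphism directly from the $\sigma$-frame isomorphism $\Coz L\cong\Coz\lambda L$.
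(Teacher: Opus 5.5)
Your proposal is correct and is essentially the paper's argument. The paper just cites \cite[Thm.~5.5]{ballLindelofTightnessDedekindMacNeille2017} for $\Coz(\lambda L)=\Coz L$, with $L$ viewed as a sublocale of $\lambda L$, and concludes from $\pR$ being the least sublocale containing the cozeros; your minimality bookkeeping makes that step explicit.

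One small point. Your justification that sublocales of $L$ are exactly the sublocales of $\lambda L$ inside $L$ only covers one direction. The other direction, that a sublocale of $L$ is a sublocale of $\lambda L$, also needs the standard identity $x\to s=\nu(x)\to s$ for $s$ fixed by the nucleus $\nu$ of $L$.
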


\begin{remark}
    The above is true for any $\lambda$-tight sublocale of a given locale. For details on $\lambda$-tightness, see \cite{ballLindelofTightnessDedekindMacNeille2017}, and in particular note that $\lambda$-tightness is equivalent to being densely C-embedded. 
\end{remark}

It is well known that $\MM$ is not Lindel\"of (see, e.g., \cite{maMichaelLineBasics2012}). Therefore, $\Ml$ is not  Lindel\"of. This together with \cref{Lind has same PR} yields:

\begin{proposition}\label{cor: Lind coreflection of OM}
    The Lindel\"of coreflection of $\Ml$ is not perfectly regular. 
\end{proposition}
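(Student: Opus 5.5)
We argue by contradiction, using \cref{Lind has same PR} together with \cref{thm: L = pR L}. Write $\lambda\Ml$ for the Lindel\"of coreflection of $\Ml$ and suppose it is perfectly regular. By \cref{thm: L = pR L}, $\lambda\Ml = \pR(\lambda\Ml)$. By \cref{Lind has same PR}, $\pR(\lambda\Ml)$ is the same sublocale as $\pR\Ml$. Here we regard $\Ml$ as a sublocale of $\lambda\Ml$ via the right adjoint of the coreflection map, and by \cite[Thm.~5.5]{ballLindelofTightnessDedekindMacNeille2017} the two frames have the same cozeros. By \cref{Michael is PR} and \cref{thm: L = pR L}, $\Ml = \pR\Ml$. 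Chaining these equalities inside $\lambda\Ml$ gives $\lambda\Ml = \pR(\lambda\Ml) = \pR\Ml = \Ml$.

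The point requiring care is that \cref{Lind has same PR} must be read as an equality of sublocales of $\lambda\Ml$, or at least as an isomorphism compatible with the coreflection, not merely an abstract isomorphism. For this I would argue directly. $\pR(\lambda\Ml)$ is the least sublocale of $\lambda\Ml$ containing $\Coz(\lambda\Ml)$, and this set equals $\Coz\Ml$ viewed inside $\lambda\Ml$. Since $\Ml$ is a sublocale of $\lambda\Ml$ containing $\Coz\Ml$, we get $\pR(\lambda\Ml)\subseteq \Ml$. Under perfect regularity of $\lambda\Ml$ this forces $\lambda\Ml\subseteq\Ml$, so the sublocale embedding of $\Ml$ into $\lambda\Ml$ is onto. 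Consequently the coreflection map $\lambda\Ml\to\Ml$ is an isomorphism.

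If the coreflection map is an isomorphism, then $\Ml$ is Lindel\"of, because $\lambda\Ml$ is always Lindel\"of. For the spatial frame $\Ml$ this means that $\MM$ is Lindel\"of, since every open cover of $\MM$ is a cover in $\Ml$. This contradicts the known fact that $\MM$ is not Lindel\"of; a witness is the cover of $\MM$ by the singletons of $\PP$ together with an open set of $\RR$ around each rational. Hence $\lambda\Ml$ is not perfectly regular.
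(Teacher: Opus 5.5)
Your proof is correct and is essentially the paper's argument. Assuming $\lambda\Ml$ is perfectly regular, \cref{thm: L = pR L}, \cref{Lind has same PR} and \cref{Michael is PR} force $\lambda\Ml\cong\Ml$, which contradicts $\MM$ not being Lindel\"of; for your explicit witness, the open sets around the rationals must be chosen small (e.g.\ intervals of summable lengths) so that uncountably many irrationals stay uncovered.

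The paper works only with abstract isomorphisms, and this suffices because Lindel\"ofness is invariant under frame isomorphism, so your extra care about the embedding is not needed. Your sandwich argument $\pR(\lambda\Ml)\subseteq\Ml\subseteq\lambda\Ml$ is nevertheless valid, and it does not even use that $\Ml$ is perfectly regular.
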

\begin{proof}
    Since $\Ml$ is not Lindel\"of, $\lambda \Ml \ncong \Ml$. On the other hand, $\pR (\lambda \Ml) \cong \pR (\Ml)$ by \cref{Lind has same PR}, and $\pR (\Ml) \cong \Ml$ by \cref{Michael is PR}. Thus, $\lambda \Ml$ is not perfectly regular.
\end{proof}
 
\begin{remark}\label{rem: CR not PR}
As an immediate consequence of \cref{cor: Lind coreflection of OM}, we obtain that the frame witnessing that perfect regularity is strictly stronger than complete regularity is the Lindel\"of coreflection of $\Ml$. We thus have the strict implications
\[
\mbox{Perfectly regular } \implies \mbox{ completely regular } \implies\mbox{ regular},
\]
which mirror the implications involving normality (see, e.g., \cite{ferreiraCompletelyNormalFrames2009},\cite{avilezPointfreeStudyZembeddings2022}):
\[
  \mbox{Perfectly normal } \implies \mbox{ completely normal } \implies \mbox{  normal.}
\]
More examples of completely regular frames that are not perfectly regular will be given below (see \cref{P-space not PR,PN vs cozero vs Oz,Examples of PR and cozero}).
  \end{remark}

For a completely regular frame $L$, recall that a dense sublocale $S$ is {\em well-embedded} in $L$ (for a definition, see \cite{ballWellembedding$G_d$densityPointfree2006}) iff $\Coz L \subseteq \Coz S$ (see \cite[Lem.~5.3]{ballLindelofTightnessDedekindMacNeille2017}). As seen in the proof of \cref{BL is pr}, $\Coz L \subseteq \Coz (\pR L)$ and thus the perfect regularization is well-embedded in $L$. In fact, the perfect regularization is the smallest dense well-embedded sublocale of $L$ as is discussed after Lemma 5.3 in \cite{ballLindelofTightnessDedekindMacNeille2017}. This leads to one more characterization of perfect regularity.
 
\begin{theorem}\label{thm: another char of PR}
    A completely regular frame $L$ is perfectly regular iff $L$ has no proper dense well-embedded sublocales. 
\end{theorem}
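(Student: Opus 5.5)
The plan is to combine \cref{thm: L = pR L} with the fact, recalled just before the statement, that $\pR L$ is the smallest dense well-embedded sublocale of $L$. Here well-embeddedness of a dense sublocale $S$ is taken in the form $\Coz L\subseteq \Coz S$, via \cite[Lem.~5.3]{ballLindelofTightnessDedekindMacNeille2017}.

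For the forward direction, suppose $L$ is perfectly regular and let $S$ be a dense well-embedded sublocale of $L$. Then $\Coz L \subseteq \Coz S \subseteq S$. So $S$ is a sublocale of $L$ containing $\Coz L$. Since $\pR L$ is the least such sublocale, $\pR L \subseteq S$. By \cref{thm: L = pR L} we have $\pR L = L$, so $S = L$. Hence $L$ has no proper dense well-embedded sublocales.

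For the converse, suppose $L$ has no proper dense well-embedded sublocales. Two facts are already established in the text:
\begin{enumerate}[label=\textup{(\roman*)}]
\item $\pR L$ is dense, since $0_L\in\Coz L\subseteq \pR L$;
\item $\pR L$ is well-embedded, since $\Coz L \subseteq \Coz(\pR L)$ was shown in the proof of \cref{BL is pr}.
\end{enumerate}
By hypothesis $\pR L$ cannot be proper, so $\pR L = L$, and $L$ is perfectly regular by \cref{thm: L = pR L}.

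The only delicate point is relying on the cited characterization of well-embeddedness for dense sublocales. This is why density of $\pR L$ must be checked before invoking \cite[Lem.~5.3]{ballLindelofTightnessDedekindMacNeille2017}. We also use the inclusion $\Coz S\subseteq S$, which is immediate because cozeros of $S$ are elements of $S$. Beyond this, the argument is a direct consequence of minimality of $\pR L$.
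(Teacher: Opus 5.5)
Your proof is correct and follows essentially the same route as the paper. Both directions rest on $\pR L$ being the least dense well-embedded sublocale, together with the fact that $\pR L = L$ characterizes perfect regularity; the only difference is that you derive this minimality directly from $\Coz L \subseteq \Coz S \subseteq S$ and the definition of $\pR L$, where the paper cites it from the discussion after Lemma~5.3 of Ball et al.
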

\begin{proof}
    First let $L$ be perfectly regular. Since $\pR L$ is the smallest dense well-embedded sublocale, 
    if $S$ is dense and well-embedded then $\pR L \subseteq S$. But since $L$ is perfectly regular, $\pR L = L$ by \cref{prop: char of PR}, so $S=L$. For the converse, since $\pR L$ is dense and well-embedded, by hypothesis it is not proper, so $\pR L = L$. Hence, again by \cref{prop: char of PR}, $L$ is perfectly regular.
\end{proof}

For the next proposition, we use existing results about frame quotients, but interpret them in the language of sublocales. For any sublocale $S$ of $L$, recall that $S$ is \emph{coz-embedded} if the corresponding quotient map $L \to S$ restricts to an onto map $\Coz L \to \Coz S$, and that $S$ is \emph{C-embedded} if for every frame map $\fO \RR \to S$ there is a frame map $\fO \RR \to L$ making the following diagram commute:
\[\begin{tikzcd}
	L && S \\
	\\
	&& {\fO \RR }
	\arrow[two heads, from=1-1, to=1-3]
	\arrow[dashed, from=3-3, to=1-1]
	\arrow[from=3-3, to=1-3]
\end{tikzcd}\]
By putting \cite[Thm.~7.2.3]{ballQuotientsPointfreeTopology2002} and \cite[Thm.~5.5] {ballLindelofTightnessDedekindMacNeille2017} together, for a dense sublocale $S$ of a completely regular frame $L$, the following are equivalent:
\begin{itemize}
    \item $S$ is C-embedded.
    \item $S$ is well-embedded and coz-embedded.
    \item $\Coz S = \Coz L$.
\end{itemize}
 
\begin{proposition}\label{BL Coz is Lind}
    Let $L$ be completely regular. If the perfect regularization of $L$ is Lindel\"of, then $L$ is perfectly regular, and so $L$ is Lindel\"of.
\end{proposition}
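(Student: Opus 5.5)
Set $M = \pR L$ and assume $M$ is Lindel\"of. The plan is to show that $M$, viewed as a dense sublocale of $L$, has $\Coz M = \Coz L$. Then the coreflection property of $\lambda$ forces $L = M$.

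\textbf{Step 1.} From the proof of \cref{BL is pr} we already know $\Coz L \subseteq \Coz M$, i.e.\ $M$ is well-embedded in $L$. For the reverse inclusion, take $c \in \Coz M$. Since $M \cong \dm(\Coz L)$ and $\Coz L$ join-generates $M$, we can write $c$ as a join in $M$ of elements of $\Coz L$. Because $c$ is a cozero element of the Lindel\"of regular frame $M$, it is itself Lindel\"of. So $c$ is already the join in $M$ of countably many $c_n \in \Coz L$.

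\textbf{Step 2.} Next I need to compare that countable join with the one in $L$. The element $\bigvee^L_n c_n$ lies in $\Coz L \subseteq M$, because $\Coz L$ is a $\sigma$-frame under the joins of $L$. Joins in a sublocale are computed as $\fp$ of the join in $L$, so
\[
\bigvee^M_n c_n = \fp\Bigl(\bigvee^L_n c_n\Bigr) = \bigvee^L_n c_n \in \Coz L .
\]
Hence $\Coz M = \Coz L$. By the equivalences listed just before the proposition, $M$ is C-embedded in $L$, and in particular it is dense and well-embedded.

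\textbf{Step 3.} I expect this step to be the main obstacle: upgrading ``same cozeros'' to $M = L$. I would use Lindel\"of coreflection. By \cite[Thm.~5.5]{ballLindelofTightnessDedekindMacNeille2017}, $L$ is a sublocale of $\lambda L$ with the same cozeros, and $\lambda L$ is the frame of $\sigma$-ideals of $\Coz L$. Since $M$ is Lindel\"of with $\Coz M = \Coz L$, we also have $\lambda M \cong M$, and $\lambda M$ is the frame of $\sigma$-ideals of $\Coz M = \Coz L$, so $M \cong \lambda L$. Identifying compatibly, we get sublocale inclusions $M \subseteq L \subseteq \lambda L$, where the outer two have the same cozeros and $M \cong \lambda L$. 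I still have to check that this isomorphism is the composite inclusion, so that $M = L = \lambda L$. The check is that the composite $\lambda L \to L \to M$ sends each $\sigma$-ideal to the join of its cozeros in $M$. By the Lindel\"of property of $M$ together with Step 2, this map is injective and surjective.

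Hence $L = \pR L$, so $L$ is perfectly regular by \cref{thm: L = pR L}. Then $L \cong M$ is Lindel\"of.
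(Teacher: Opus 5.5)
Your proof is correct and follows the same outline as the paper's. First you show $\Coz(\pR L)=\Coz L$; then you use the sandwich $\pR L\subseteq L\subseteq\lambda L$ together with $\lambda L=\lambda(\pR L)\cong\pR L$.

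The one real difference is in the first step. The paper obtains $\Coz(\pR L)\subseteq\Coz L$ by citing Dube's result that Lindel\"of sublocales are coz-embedded. You prove it directly, from three facts:
\begin{itemize}
\item a cozero element of the Lindel\"of frame $M$ is a Lindel\"of element;
\item $\Coz L$ join-generates $M$;
\item a countable $L$-join of cozeros lies in $\Coz L\subseteq M$, so it is fixed by $\fp$ and equals the corresponding $M$-join.
\end{itemize}
This is more elementary and self-contained: in effect you reprove the special case of Dube's theorem needed here, exploiting that $\Coz L\subseteq M$. The cost is a little extra length.

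In Step 3 you also make explicit a point the paper leaves implicit in its equality signs. A frame can be isomorphic to a proper sublocale of itself, so an abstract isomorphism $M\cong\lambda L$ would not by itself force the inclusions to be equalities. What is needed, and what you check, is that the composite quotient $\lambda L\to L\to M$, $J\mapsto\fp(\bigvee^L J)=\bigvee^M J$, is bijective.

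Two small deductions deserve a line each, but neither is a gap:
\begin{itemize}
\item Since $\lambda L\to L$ is onto, injectivity of the composite forces $\fp$ to be injective, so $L=M$.
\item For the Lindel\"of property of cozero elements, write $c=\bigvee_n d_n$ with $d_n\prec c$ in $M$, and apply the Lindel\"of property of $M$ to the covers $d_n^*\vee c=1$.
\end{itemize}
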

\begin{proof} 
By \cite[Prop.~3.2]{dubeCozontoFrameMaps2006}, 
every Lindel\"of sublocale is coz-embedded. Therefore,  
    if $\pR L$ is Lindel\"of, then $\pR L$ is coz embedded. As mentioned before \cref{thm: another char of PR}, the perfect regularization is always well-embedded, and hence by the above, $\Coz L = \Coz (\pR L)$. Thus, $\lambda L = \lambda (\pR L)= \pR L$, where the first equality follows from the definition of $\lambda$ and the second because the perfect regularization is Lindel\"of by hypothesis. Since $L$ is always a sublocale sandwiched between its perfect regularization and its Lindel\"of coreflection, the result follows.  
\end{proof}
Any boolean frame is trivially perfectly regular, so in particular the booleanization $\fB L$ of a completely regular frame $L$ is such. However, $\pR L$ is in general different from the booleanization. For example, we show that the booleanization of the Michael line, $\fB \Ml$, is isomorphic to the powerset of $\PP$, and hence it is different from $ \pR(\fO M)\cong \fO M $. 

\begin{proposition}\label{booleanization of M}
    The booleanization of $\Ml$ is isomorphic to the powerset of the irrationals; that is, $\fB(\Ml) \cong \mathcal{P}(\PP)$.
\end{proposition}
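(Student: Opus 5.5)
We work spatially: for a space $X$, the booleanization $\fB(\fO X)$ is the frame of regular open sets $\mathrm{RO}(X)$, since in $\fO X$ the pseudocomplement is $U^{*}=\Int(X\setminus U)$ and hence $U^{**}=\Int\overline{U}$. So the claim reduces to showing $\mathrm{RO}(\MM)\cong\mathcal P(\PP)$ as complete Boolean algebras. The intended map is $\varphi\colon \mathrm{RO}(\MM)\to\mathcal P(\PP)$, $\varphi(W)=W\cap\PP$, with inverse candidate $\psi(K)=K^{**}=\Int_\MM\overline{K}^{\MM}$ for $K\subseteq \PP$ (each such $K$ is open in $\MM$).

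The key topological fact is that $\PP$ is a dense open subset of $\MM$ consisting of isolated points. It is open by definition. It is dense because every nonempty open set $U\cup K$ with $U$ open in $\RR$ either has $U\neq\emptyset$, so it meets $\PP$ by density of irrationals in $\RR$, or $U=\emptyset$ and then $K\neq\emptyset$ is itself irrational. We then use the general fact that for a dense subset $D$ of a space, $W\mapsto W\cap D$ is injective on regular open sets: if $W$ is regular open, then $\overline{W}=\overline{W\cap D}$ because $W$ is open and $D$ dense, hence $W=\Int\overline{W\cap D}$. This gives $\psi\circ\varphi=\mathrm{id}$.

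For $\varphi\circ\psi=\mathrm{id}$, fix $K\subseteq\PP$ and show $\Int\overline{K}\cap\PP=K$. Clearly $K\subseteq \Int\overline K$ since $K$ is open. Conversely, if $p\in\PP\setminus K$, then $\{p\}$ is open and disjoint from $K$, so $p\notin\overline K$. Thus $\varphi$ and $\psi$ are mutually inverse. Since both are order-preserving, $\varphi$ is an order isomorphism, hence a frame isomorphism between $\fB(\Ml)$ and $\mathcal P(\PP)$.

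The main obstacle, which is mild, is making sure that the lattice operations in $\fB(\Ml)$ are the regular-open ones rather than the set-theoretic ones; the order-isomorphism argument sidesteps this. One also needs the standard identification $x^{**}=\Int\overline{x}$ in $\fO X$. Everything else is a direct check using that the irrationals are isolated and dense in $\MM$.
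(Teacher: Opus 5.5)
Your proof is correct, but it takes a more hands-on, spatial route than the paper.

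You identify $\fB(\Ml)$ with the regular open sets of $\MM$ via $U^{**}=\Int\overline{U}$. You then check by direct computation that $W\mapsto W\cap\PP$ and $K\mapsto\Int\overline{K}$ are mutually inverse. Density of $\PP$ handles one composite and isolatedness of the irrationals handles the other. In effect you prove the classical fact that $\mathrm{RO}(X)\cong\mathcal{P}(D)$ whenever $D$ is a dense set of isolated points.

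The paper never computes closures or interiors. It observes that $q(U\cup K)=(U\cup K)\cap\PP$ is a surjective frame homomorphism $\Ml\to\mathcal{P}(\PP)$. This map is dense, since $q(x)=\varnothing$ forces $x=\varnothing$, so it determines a dense boolean sublocale $S$. Isbell's density theorem then gives $\fB(\Ml)\subseteq S$. Conversely, $S$ is boolean and pseudocomplements in a dense sublocale are computed in $\Ml$, so $S\subseteq\fB(\Ml)$.

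The two arguments use the same two ingredients in different places:
\begin{itemize}
\item Density of $\PP$ becomes denseness of $q$ in the paper and injectivity of $\varphi$ in yours.
\item Openness of every subset of $\PP$ becomes surjectivity of $q$ in the paper and the identity $\varphi\circ\psi=\mathrm{id}$ in yours.
\end{itemize}

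What each approach buys:
\begin{itemize}
\item The paper's argument is an instance of a general pointfree principle: a dense quotient of $L$ onto a boolean frame is the booleanization. It also pins down $\fB(\Ml)$ as a specific sublocale, namely the one induced by $q$, i.e.\ $\open(\PP)$. That identification is reused later when the booleanization of $\Ml$ is shown to be an open sublocale.
\item Your argument depends on spatiality but is entirely elementary, and it gives the explicit inverse $K\mapsto\Int\overline{K}$ of the isomorphism.
\end{itemize}
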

\begin{proof}
    Recalling that each open of the Michael line is of the form $U \cup K$, where $U$ is open in $\RR$ and $K \subseteq\PP$, define the onto frame homomorphism $q\colon \Ml\to \mathcal{P} \PP$ by $q(U\cup K)=(U\cup K)\cap \PP$.
    If $q(U\cup K)=\varnothing$, then $(U\cap \PP)\cup K = \varnothing$, so $K=U=\varnothing$, and hence $U\cup K=\varnothing$.  
    Therefore, $\mathcal{P} \PP$ is a dense quotient of $\Ml$. Thus, $\mathcal{P} \PP$ is isomorphic to a dense boolean sublocale $S$ of $\Ml$. Since $S$ is dense, $\fB\Ml \subseteq S$ by Isbell's density theorem; and since $S$ is boolean, $S \subseteq \fB\Ml$ (because pseudocomplements in dense sublocales are calculated as in the ambient frame). Consequently, $S = \fB\Ml$, and hence $\mathcal{P} \PP\cong\fB\Ml$.
   \end{proof}

We next characterize when booleanization and perfect regularization coincide. 
Recall that a frame $L$ is \emph{almost P} if $\Coz L\subseteq \fB L$\footnote{Such frames are also known as \emph{$\omega_1$-hollow} (see \cite[6.1]{ballLhollowFramesLrepletions2024}).} (see \cite[Defn.~8.4.6]{ballQuotientsPointfreeTopology2002}). 
Also recall that a meet-semilattice $A$ is \emph{$\wedge$-subfit} if for all $a \nleq b$ in $A$, there exists $c \in A$ such that $c \meet a \neq 0 = c \meet b$.\footnote{These are also known as {\em separative} \cite[Def.~14.8]{jechSetTheory2003} or having {\em disjunction property} \cite[Lem.~3]{wallmanLatticesTopologicalSpaces1938}.}

\begin{theorem}\label{char of almost P}
    For a completely regular frame $L$, the following are equivalent.
    \begin{enumerate}[label=\textup{(\arabic*)}]
        \item\label{item:aP-1} $L$ is an almost $P$-frame.
         \item\label{item:aP-2} The booleanization 
        is well-embedded.
        \item\label{item:aP-3} The perfect regularization and booleanization coincide; that is, $\pR L = \fB L$.
        \item\label{item:aP-4} The perfect regularization is boolean.
        \item\label{item:aP-5} $\Coz L$ is $\wedge$-subfit.
       \end{enumerate}
\end{theorem}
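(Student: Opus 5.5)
I will organize the argument as the cycle (1)$\Rightarrow$(3)$\Rightarrow$(4)$\Rightarrow$(1), and then attach (2) and (5) to it separately.

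\emph{The cycle.} For (1)$\Rightarrow$(3), assume $\Coz L\subseteq\fB L$. Then $\fB L$ is a sublocale of $L$ containing $\Coz L$. Since $\pR L$ is the least such sublocale, $\pR L\subseteq \fB L$. The reverse inclusion always holds, because $\fB L$ is the smallest dense sublocale and $\pR L$ is dense. Hence $\pR L=\fB L$. The implication (3)$\Rightarrow$(4) is trivial. For (4)$\Rightarrow$(1), suppose $\pR L$ is boolean. It is dense, and boolean dense sublocales lie inside $\fB L$: pseudocomplements in a dense sublocale are computed as in $L$, so every element of a boolean dense sublocale is regular in $L$. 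This is the same argument as in \cref{booleanization of M}. Therefore $\Coz L\subseteq \pR L\subseteq\fB L$, which is (1).

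\emph{Condition (2).} Here I use the criterion quoted before \cref{thm: another char of PR}: a dense sublocale $S$ is well-embedded iff $\Coz L\subseteq \Coz S$. For (3)$\Rightarrow$(2), if $\fB L=\pR L$, then $\fB L$ is well-embedded because $\pR L$ always is. For (2)$\Rightarrow$(1), since $\fB L$ is dense and well-embedded, we get $\Coz L\subseteq \Coz(\fB L)\subseteq \fB L$. Alternatively, $\pR L$ is the smallest dense well-embedded sublocale, so $\pR L\subseteq \fB L$, which gives (3).

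\emph{Condition (5).} This is where the real work lies, and I would argue through (1).

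For (1)$\Rightarrow$(5), take cozeros $a\nleq b$. Since $b\in\fB L$, we have $b=b^{**}$, so $a\nleq b^{**}$. Then $a\wedge b^{*}\neq 0$, since otherwise $a\le b^{**}$. Complete regularity lets me write $b^*$ as a join of cozeros. Hence some cozero $c\le b^*$ satisfies $c\wedge a\ne 0$, and $c\wedge b\le b^*\wedge b=0$. This is exactly $\wedge$-subfitness.

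For (5)$\Rightarrow$(1), let $c\in\Coz L$. I want $c^{**}\le c$. Suppose instead $c^{**}\nleq c$. Join-generation by cozeros gives a cozero $d\le c^{**}$ with $d\nleq c$. Subfitness then supplies a cozero $e$ with $e\wedge d\ne 0$ and $e\wedge c=0$. The second condition gives $e\le c^*$, so $e\wedge d\le c^*\wedge c^{**}=0$, a contradiction. Hence $c=c^{**}\in\fB L$.

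The main obstacle is making sure the subfitness condition is applied within $\Coz L$ and that the element $d$ can be taken to be a cozero. Both points are handled by complete regularity, that is, by join-generation by cozeros. This is why the hypothesis that $L$ is completely regular is needed.
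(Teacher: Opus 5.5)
Your proof is correct, and all five conditions are covered. The cycle (1)$\Rightarrow$(3)$\Rightarrow$(4)$\Rightarrow$(1) is exactly the paper's argument. The differences lie in how you attach (2) and (5).

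\textbf{Condition (2).} The paper simply cites Prop.~6.5 of Ball et al.\ for (1)$\Leftrightarrow$(2). You instead derive it from two facts already stated in the paper: the criterion that a dense $S$ is well-embedded iff $\Coz L\subseteq\Coz S$, and the fact that $\pR L$ is always well-embedded. This makes the step self-contained, and (2)$\Rightarrow$(1) reduces to the one-liner $\Coz L\subseteq\Coz(\fB L)\subseteq\fB L$.

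\textbf{Condition (5).} The paper's official proof links (4) and (5) through $\pR L\cong\bl(\Coz L)$. It then cites the theorem that the Bruns--Lakser completion of a meet-semilattice is boolean iff the semilattice is $\wedge$-subfit. That route places the equivalence in a purely semilattice-theoretic setting, independent of frame structure.

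Your direct proof of (1)$\Leftrightarrow$(5) is essentially the one the paper gives in the Remark following the theorem, with two small streamlinings:
\begin{itemize}
\item In (1)$\Rightarrow$(5) you take the separating element to be $b^*$ explicitly, instead of invoking $\wedge$-subfitness of the boolean frame $\fB L$.
\item In (5)$\Rightarrow$(1) you only need a cozero $d\le c^{**}$ with $d\nleq c$, which join-generation supplies. The Remark works with $y\prec c^{**}$, but the rather-below relation is not needed.
\end{itemize}
What your route buys is an elementary argument using only join-generation by cozeros and basic pseudocomplement identities.
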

\begin{proof}
\ref{item:aP-1}$\iff$\ref{item:aP-2}: See \cite[Prop. 6.5]{ballLindelofTightnessDedekindMacNeille2017}.
    
    \ref{item:aP-1}$\implies$\ref{item:aP-3}: Since $L$ is almost P, $\Coz L \subseteq \fB L$, so $\pR L \subseteq \fB L$ because it is the least sublocale containing $\Coz L$. The other inclusion always holds (see the beginning of the section).
       
       \ref{item:aP-3}$\implies$\ref{item:aP-4}: This is obvious.
       
     \ref{item:aP-4}$\implies$\ref{item:aP-1}: If $\pR L$ is boolean then since it is a dense sublocale of $L$, each element in it is equal to its own double pseudocomplement, and so $\pR L\subseteq \fB L$. In particular, $\Coz L \subseteq \fB L$, and hence $L$ is an almost P-frame.

    \ref{item:aP-4}$\iff$\ref{item:aP-5}: Since $\pR L \cong \bl (\Coz L)$, this follows from \cite[Thm.~6.3]{bezhanishviliDedekindMacNeilleRelatedCompletions2025}.
\end{proof}

\begin{remark}
    In the above proposition, condition \ref{item:aP-5} gives a characterization of almost P-frames that is ``internal'' to $\Coz L$. This characterization was proved indirectly in \cite[Prop.~5.2]{ballLhollowFramesLrepletions2024}. 
    It is worth giving a direct proof of this equivalence: 

\ref{item:aP-1}$\implies$\ref{item:aP-5}: Let $a \nleq b$ in $\Coz L$. Since $L$ is almost P, $\Coz L \subseteq \fB L$, so $a \nleq b$ in $\fB L$. Since the latter is a boolean frame, it is $\wedge$-subfit. So, there is $x \in \fB L$ with $a \meet x \neq 0 = b \meet x$. Since $L$ is completely regular, $x = \bigvee \{c \in \Coz L \mid c \leq x \}$, and hence one such $c$ has non-zero meet with $a$, but zero meet with $b$, showing $\Coz L$ is $\meet$-subfit.

\ref{item:aP-5}$\implies$\ref{item:aP-1}: Let $c\in \Coz L$. Since $L$ is completely regular and $c\leq c^{**}$, to prove that $c=c^{**}$, it suffices to show that whenever $y\prec c^{**}$ for $y\in \Coz L$, then $y\leq c$.\footnote{Recall that $\prec$ stands for the {\em rather below} relation given by $a \prec b$ provided $a^* \vee b =1$.} Suppose $y\nleq c$. By $\wedge$-subfitness, there is $x\in \Coz L$ such that $y\wedge x\neq 0 = c\wedge x$. We have
\[ c\meet x=0\implies x\leq c^*\implies c^{**}\leq x^*.\]
Since $y\prec c^{**}$, we obtain $y\prec x^*$, so 
$y \le x^*$. Thus, $y\meet x=0$, a contradiction.
\end{remark}

As an immediate consequence of \cref{char of almost P}, we obtain:

\begin{corollary}
     The frame of opens of the Michael line 
     is not an almost $P$-frame.
\end{corollary}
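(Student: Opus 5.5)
We argue by contradiction, combining \cref{char of almost P} with \cref{Michael is PR} and \cref{booleanization of M}. By \cref{Michael is PR}, $\Ml$ is perfectly regular, so \cref{thm: L = pR L} gives $\pR\Ml = \Ml$. Suppose $\Ml$ were an almost $P$-frame. By the implication \ref{item:aP-1}$\implies$\ref{item:aP-3} of \cref{char of almost P}, we would then have $\pR\Ml = \fB\Ml$, and hence $\Ml = \fB\Ml$. In other words, $\Ml$ would be a boolean frame.

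This contradicts \cref{booleanization of M}. The cleanest route is to exhibit an element of $\Ml$ that is not its own double pseudocomplement. Take $a = \RR\setminus\{0\}$, which is open in $\RR$ and hence in $\MM$. Its pseudocomplement $a^*$ is the interior in $\MM$ of $\{0\}$. Since $0$ is rational, $\{0\}$ contains no nonempty open set of the form $U\cup K$. Indeed, any nonempty open set of $\RR$ contains irrationals besides $0$, and $K\subseteq\PP$ cannot contain $0$. So $a^* = \varnothing$, hence $a^{**} = \RR \ne a$, and $\Ml$ is not boolean.

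As an alternative argument, one could note that $\fB\Ml\cong\mathcal P(\PP)$ is atomic with atoms the isolated irrationals. Its top element is $\PP$ viewed inside $\Ml$ through the quotient $q$, whereas $\Ml$ has top $\RR\neq\PP$. Since $\Ml$ is spatial and sober, the sublocale $\fB\Ml$ is then proper. Either way we reach a contradiction, so $\Ml$ is not almost $P$.

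One could also verify the failure of condition \ref{item:aP-5} directly, using the rational point $0$. The only step needing care is the computation $a^{*}=\varnothing$. This amounts to the observation that rational points have only Euclidean neighbourhoods in $\MM$, which follows immediately from the definition of the Michael line. I expect no real obstacle.
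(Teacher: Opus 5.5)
Your proof is correct and is essentially the paper's: perfect regularity gives $\pR\Ml=\Ml$, so if $\Ml$ were almost P, \cref{char of almost P} would force $\Ml=\fB\Ml$; where the paper then cites \cref{booleanization of M} for $\fB\Ml\neq\Ml$, your explicit computation $(\RR\setminus\{0\})^{**}=\RR$ is a valid and more elementary substitute. Your ``alternative'' aside is muddled, since as a sublocale $\fB\Ml=\open(\PP)$ contains the top $\RR$ of $\Ml$, so its top is not $\PP$; but the main argument does not depend on it.
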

\begin{proof}
    Since $\Ml$ is perfectly regular, $\pR(\Ml) = \Ml$. But $\fB\Ml \neq \Ml$ by \cref{booleanization of M}. Thus, \cref{char of almost P}\ref{item:aP-3} fails for $\Ml$, and hence $\Ml$ is not almost P.
\end{proof}

Clearly, a completely regular frame $L$ is almost P ($\Coz L\subseteq \fB L$) and perfectly normal ($\Coz L = L$) iff $L$ is boolean\footnote{That is, a complete boolean algebra.}. The next result shows that perfect normality may be weakened to perfect regularity.     

\begin{theorem}\label{almost P + PR= boolean}
    A completely regular frame $L$ is boolean iff $L$ is almost P and perfectly regular.
\end{theorem}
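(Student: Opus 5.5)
The plan is to obtain both directions directly from \cref{char of almost P} and \cref{thm: L = pR L}, so that no new computation is needed.

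For the forward direction, let $L$ be a boolean completely regular frame. Then $\fB L = \setof{x^{**}}{x \in L} = L$, since $x = x^{**}$ for every $x$ in a boolean frame. In particular $\Coz L \subseteq L = \fB L$, so $L$ is almost P. Perfect regularity was already noted before \cref{booleanization of M}: every boolean frame is perfectly regular. If we want to check this against \cref{defn: PR}, we argue as follows. Complete regularity gives that $\Coz L$ join-generates $L$. For meet-generation, take $a \in L$ and write $a^* = \bigvee \setof{c \in \Coz L}{c \le a^*}$. Taking pseudocomplements turns this join into a meet, so $a = a^{**} = \bigwedge \setof{c^*}{c \in \Coz L,\ c \le a^*}$. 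Each $c^*$ is again a cozero, because in a boolean regular frame $c^*$ is the complement of $c$, and complements of cozeros are cozeros: complemented elements are cozeros (see, e.g., the standard fact that complemented elements of a completely regular frame are cozero). Alternatively, we can avoid this by invoking \cref{thm: L = pR L} together with the sandwich $\fB L \subseteq \pR L \subseteq L$: since $\fB L = L$, this forces $\pR L = L$, so $L$ is perfectly regular.

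For the converse, suppose $L$ is almost P and perfectly regular. Perfect regularity gives $\pR L = L$ by \cref{thm: L = pR L}. Being almost P gives $\pR L = \fB L$ by \cref{char of almost P}\ref{item:aP-3}. Hence $L = \fB L$, which means every element satisfies $x = x^{**}$, and so $L$ is boolean.

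The only point needing care is the forward direction's claim that a boolean frame is perfectly regular. The sandwich argument $\fB L \subseteq \pR L \subseteq L$ handles this cleanly, because $\fB L$ is contained in every dense sublocale and $\pR L$ is dense, as was observed at the start of \cref{sec: Perfect regularization}. I would therefore present that route and not the complement argument.
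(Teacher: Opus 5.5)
Your proposal is correct and is essentially the paper's proof, which just combines \cref{thm: L = pR L} and \cref{char of almost P}. If $L$ is boolean then $\fB L = L$, which makes $L$ almost P and, via $\fB L \subseteq \pR L \subseteq L$, perfectly regular; conversely, almost P and perfectly regular give $L = \pR L = \fB L$.
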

\begin{proof}
    Combine \cref{thm: L = pR L,char of almost P}.
\end{proof}

\begin{remark}\label{P-space not PR}
    We now give another example of a completely regular frame that is not perfectly regular. Unlike the example in \cref{rem: CR not PR}, this one is spatial. Recall that $X$ is an almost P-space iff $\fO X$ is an almost P-frame. By \cref{almost P + PR= boolean}, any completely regular almost P-space that is not discrete is not perfectly regular. Consider, for example, the one-point compactification of an uncountable discrete space \cite[Example 2]{levyAlmostPspaces1977}. 
\end{remark}

 We next consider what it takes for a perfectly regular frame to be perfectly normal. We recall that a frame $L$ is a \emph{cozero frame} if $\Coz L$ is a frame. By  \cite[Rem.~7.2.1]{bezhanishviliSemilatticeBaseHierarchy2024}), we have the following characterization:
\begin{proposition}\label{prop: cozero frames} 
For a completely regular frame $L$, the following are equivalent.
    \begin{enumerate}[label=\textup{(\arabic*)}]
        \item $L$ is a cozero frame.
        \item $\Coz L$ is the perfect regularization; that is, $\Coz L = \pR L$.
        \item $\Coz L$ is a sublocale of $L$.
        \item $x \mapsto\bigwedge\setof{c \in \Coz L}{x\leq c}$ is a nucleus with fixed points $\Coz L$. 
    \end{enumerate}
\end{proposition}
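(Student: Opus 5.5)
I would prove the cycle (1)$\Rightarrow$(3)$\Rightarrow$(2)$\Rightarrow$(4)$\Rightarrow$(1). Throughout I use that $\Coz L$ is a sub-$\sigma$-frame of $L$. It contains $0,1$ and is closed under finite meets and countable joins computed in $L$. Consequently $\Coz L$ is always a meet-semilattice, and the map $\fp(x)=\bigwedge\setof{c\in\Coz L}{x\le c}$ is the nucleus of $\pR L$, whose fixed points form the least sublocale containing $\Coz L$.

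For (1)$\Rightarrow$(3), assume $\Coz L$ is a frame. I first show that arbitrary joins in $\Coz L$ agree with joins in $L$. Let $S\subseteq\Coz L$ and let $s$ be its join in $\Coz L$. Since $L$ is completely regular, each $c\in\Coz L$ is a join in $L$ of cozeros rather below it, and this gives $s=\bigvee_L S$.

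Here I would not rely on complete regularity directly. Instead I would use the known fact that every cozero frame satisfies: an arbitrary join in $L$ of cozeros is again a cozero. Equivalently, $\Coz L$ closed under joins means $\Coz L$ is a subframe. If that fact is not available in citable form, I would argue as follows. Since $\Coz L$ is a complete regular $\sigma$-frame, it is a regular frame. By \cref{prop: char of PR}(\ref{item:PR-dm})-type reasoning, $\dm(\Coz L)\cong\Coz L$, while $\pR L\cong\dm(\Coz L)$ contains $\Coz L$ as a join- and meet-dense subset. A complete lattice is its own Dedekind--MacNeille completion, so the inclusion $\Coz L\subseteq\pR L$ must be equality. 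This last route avoids the join issue entirely and proves (1)$\Rightarrow$(2) directly.

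With that in hand, (2)$\Rightarrow$(3) is immediate, since $\pR L$ is a sublocale. For (3)$\Rightarrow$(4), if $\Coz L$ is a sublocale, it is the fixed-point set of its nucleus $\nu$, which sends $x$ to the least element of $\Coz L$ above $x$. That least element is exactly $\bigwedge\setof{c\in\Coz L}{x\le c}$, because a sublocale is closed under arbitrary meets. For (4)$\Rightarrow$(1), the fixed-point set of a nucleus is a frame. Hence $\Coz L$ is a frame, so $L$ is a cozero frame. Alternatively, (3)$\Rightarrow$(2) also follows directly: $\Coz L$ is a sublocale containing $\Coz L$, hence it contains $\pR L$, and it is contained in $\pR L$.

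The main obstacle is (1)$\Rightarrow$(2). A frame structure on $\Coz L$ a priori need not be computed as in $L$, so the content lies in identifying $\Coz L$ with $\dm(\Coz L)\cong\pR L$. I would carefully check that the isomorphism $\pR L\cong\dm(\Coz L)$ from \cite{ballLindelofTightnessDedekindMacNeille2017} restricts to the identity on $\Coz L$. Then completeness of $\Coz L$ forces the inclusion $\Coz L\subseteq\pR L$ to be surjective: every $x\in\pR L$ equals $\fp(x)$, a meet of cozeros computed in $L$. I must still confirm that this meet lies in $\Coz L$. Completeness gives the meet in $\Coz L$ only up to the interior-type correction, so I would use meet-density in $\dm$, via the universal property, to conclude equality.
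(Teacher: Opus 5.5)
Your final chain (1)$\Rightarrow$(2)$\Rightarrow$(3)$\Rightarrow$(4)$\Rightarrow$(1) is correct. It is also self-contained, whereas the paper gives no argument here and simply cites \cite[Rem.~7.2.1]{bezhanishviliSemilatticeBaseHierarchy2024}. The steps (2)$\Rightarrow$(3)$\Rightarrow$(4)$\Rightarrow$(1), and your shortcut (3)$\Rightarrow$(2), are fine as written.

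You should, however, strike the first half of your (1)$\Rightarrow$(3) paragraph, because both claims there are false. Joins in $\Coz L$ need not be joins in $L$. Nor does a cozero frame have $\Coz L$ closed under arbitrary joins of $L$. The last item of \cref{Examples of PR and cozero} is a counterexample. Take $X$ uncountable and $L=\lambda(\mathcal P X)$. Then $\Coz L=\setof{{\downarrow}A}{A\subseteq X}\cong\mathcal P X$ is a frame. Yet $\bigvee_L{\downarrow}\{x\}$ ($x\in X$) is the non-principal $\sigma$-ideal of countable subsets, while the join in $\Coz L$ is ${\downarrow}X=1_L$. This is exactly why (1)$\Rightarrow$(2) is the only implication with real content.

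Your Dedekind--MacNeille route does handle (1)$\Rightarrow$(2), provided the identification is made over $\Coz L$. Since $\Coz L$ is complete, two embeddings are Dedekind--MacNeille completions of $\Coz L$:
\begin{itemize}
\item $\mathrm{id}_{\Coz L}$;
\item the inclusion $\Coz L\hookrightarrow\pR L$, with join-density coming from complete regularity and meet-density from the formula for $\fp$.
\end{itemize}
The unique isomorphism fixing $\Coz L$ is then the inclusion, which is therefore onto.

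The vague remarks about an ``interior-type correction'' can be replaced by a three-line argument. For $m\in\pR L$, let $D=\setof{c\in\Coz L}{c\le m}$ and let $s$ be its join in $\Coz L$. Every cozero $c\ge m$ is an upper bound of $D$ in $\Coz L$, so $s\le c$, and hence $s\le\fp(m)=m$. Thus $s=\max D$, and complete regularity gives $m=\bigvee_L D=s\in\Coz L$.

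Compared with the cited route through the semilattice-base hierarchy, your argument buys transparency. It shows that (1) can be weakened to mere completeness of $\Coz L$; distributivity is never used. It also shows that complete regularity enters only through join-density of $\Coz L$ in $\pR L$.
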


We digress slightly to give the reader some understanding of where the notion of being a cozero frame fits into some other better studied classes of frames. Recall that a frame is \emph{Oz} if pseudocomplemented elements are cozeros, or equivalently, $\fB L \subseteq \Coz L$ \cite[Prop.~2.2]{banaschewskiOzPointfreeTopology2009}. 

\begin{proposition}\label{PN vs cozero vs Oz}
    For a completely regular frame $L$, consider the following three conditions.
    \begingroup
    \begin{enumerate}[label=\textup{(\arabic*)}]
        \item\label{item:PN-1} $L$ is perfectly normal.
        \item\label{item:PN-2} $L$ is a cozero frame.
        \item\label{item:PN-3} $L$ is  an $Oz$-frame.
    \end{enumerate}
    We have \ref{item:PN-1} $\implies$ \ref{item:PN-2} $\implies$ \ref{item:PN-3} and the reverse implications do not hold.
    \endgroup
\end{proposition}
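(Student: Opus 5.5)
We need (1)⇒(2), (2)⇒(3), and counterexamples showing that neither reverse implication holds.

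\textbf{(1)$\implies$(2).} If $L$ is perfectly normal, then $\Coz L = L$. In particular $\Coz L$ is a frame, so $L$ is a cozero frame.

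\textbf{(2)$\implies$(3).} Suppose $L$ is a cozero frame. By \cref{prop: cozero frames}, $\Coz L = \pR L$. We noted at the beginning of \cref{sec: Perfect regularization} that the booleanization is always a sublocale of the perfect regularization. Hence $\fB L \subseteq \pR L = \Coz L$, which is exactly the Oz condition of \cite[Prop.~2.2]{banaschewskiOzPointfreeTopology2009}.

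\textbf{(2) does not imply (1).} We need a cozero frame that is not perfectly normal. By \cref{prop: cozero frames}, $\pR L = \Coz L$ for every cozero frame $L$. If such an $L$ were also perfectly regular, then $L = \pR L = \Coz L$ by \cref{thm: L = pR L}, so $L$ would be perfectly normal. So any cozero frame that is not perfectly normal is automatically not perfectly regular. A clean way to produce one is to take a non-perfectly-normal frame $L$ that is not a cozero frame and pass to its perfect regularization $\pR L$. Choosing $L = \Ml$ fails, since $\pR \Ml = \Ml$. Instead I would take a frame whose cozeros already form a frame but which is not perfectly normal. The most natural candidate is a non-discrete P-space, for instance the one-point Lindel\"ofication of an uncountable discrete set. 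In a P-space, countable intersections of open sets are open, so cozero sets are clopen and closed under arbitrary finite meets and countable joins. Whether they are closed under arbitrary joins needs checking, and in general they are not, since an arbitrary union of clopens need not be clopen. So that candidate may fail. The more reliable choice is the frame $\Coz X$ itself, for a space like $\Ml$ whose cozeros form a frame that is not boolean, or more simply the frame $\lambda$-style construction: a compact (hence Lindel\"of) non-metrizable Oz space. Finding a correct example here is the main obstacle, and I would look in \cite{bezhanishviliSemilatticeBaseHierarchy2024} for a cozero frame that is not perfectly normal.

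\textbf{(3) does not imply (2).} We need an Oz-frame that is not a cozero frame. I would use an extremally disconnected space, where $\fB L \subseteq \Coz L$ holds trivially because regular opens are clopen. Concretely, take $\beta\NN$. Its clopens are cozeros, so it is Oz. Its cozeros, however, do not form a frame: an arbitrary union of cozeros, such as the open set $\beta\NN\setminus\{p\}$ for a point $p$ in the remainder, is not a cozero set, because points of the remainder are not $G_\delta$. Since $\beta\NN$ is compact, $\Coz L$ is a frame only if every open set is a cozero, which is exactly perfect normality, and that fails here.
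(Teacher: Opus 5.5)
Your arguments for (1)$\implies$(2) and (2)$\implies$(3) are correct. The latter, $\fB L\subseteq\pR L=\Coz L$ via \cref{prop: cozero frames} and Isbell's density theorem, is a self-contained replacement for the paper's citation of \cite[Thm.~7.10]{bezhanishviliSemilatticeBaseHierarchy2024}. The gaps are in the two non-implications, which the paper settles by citing Examples 7.7 and 7.14 of that article.

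For ``(2) does not imply (1)'' you give no example, and your leads cannot work. $\Coz(\Ml)$ is \emph{not} a frame: by \cref{PN=cozero+PR}, $\Ml$ is perfectly regular but not perfectly normal. Passing to $\pR L$ is also hopeless, because $\pR L$ is always perfectly regular (\cref{BL is pr}), so if it were cozero it would be perfectly normal. An example available from the paper's own tools is $\lambda\mathcal{P}(D)$ with $D$ uncountable. The Lindel\"of coreflection has the same cozeros, so $\Coz(\lambda\mathcal{P}(D))\cong\mathcal{P}(D)$ is a frame. Yet $\lambda\mathcal{P}(D)\neq\Coz(\lambda\mathcal{P}(D))$, since the former is Lindel\"of and $\mathcal{P}(D)$ is not.

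For ``(3) does not imply (2)'', $\beta\NN$ is a correct choice and your Oz argument is fine, but you have not shown that $\beta\NN$ is not a cozero frame. Being cozero only requires $\Coz L$ to be a frame in the induced order (equivalently a sublocale, by \cref{prop: cozero frames}). Its joins are $\fp\bigl(\bigvee S\bigr)$, not unions. So the non-cozero open set $\beta\NN\setminus\{p\}$ only shows that $\Coz(\beta\NN)$ is not a subframe.

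Your bridging claim, that for compact $L$ the lattice $\Coz L$ is a frame only if $\Coz L=L$, is unjustified and in fact consistently false. Let $Y$ be perfectly normal, countably compact and not compact (such spaces exist consistently, e.g.\ Ostaszewski's space). Then $Y$ is $G_\delta$-dense in $\beta Y$, so restriction is an order-isomorphism $\Coz(\beta Y)\cong\fO Y$. This makes $\fO(\beta Y)$ a compact cozero frame that is not perfectly normal.

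What you actually need is a family of cozeros of $\beta\NN$ whose meet in $\fO(\beta\NN)$ is not cozero, which takes real combinatorics. For instance:
\begin{itemize}
\item Take a Hausdorff gap $(A_\alpha)_{\alpha<\omega_1}$, $(B_\beta)_{\beta<\omega_1}$, and put $K_\alpha=\overline{A_\alpha}\setminus\NN$ (closures in $\beta\NN$). Each $K_\alpha$ is a zero-set.
\item The meet of the cozeros $\beta\NN\setminus K_\alpha$ is $\beta\NN\setminus F$, where $F=\overline{\bigcup_\alpha K_\alpha}\subseteq\beta\NN\setminus\NN$.
\item If $F$ were a zero-set, then $(\beta\NN\setminus\NN)\setminus F$ would be a countable union of clopens $\overline{E_n}\setminus\NN$.
\item Each $\overline{B_\beta}\setminus\NN$ is open in the remainder and misses every $K_\alpha$, hence misses $F$. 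By compactness, $B_\beta\subseteq^* E_1\cup\dots\cup E_k$ for some $k$.
\item Since $\omega_1$ has uncountable cofinality, one $k$ serves every $\beta$. Then $E_1\cup\dots\cup E_k$ separates the gap, a contradiction.
\end{itemize}
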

\begin{proof}
    $\ref{item:PN-1}\implies\ref{item:PN-2}$ is obvious, and for
    $\ref{item:PN-2}\implies\ref{item:PN-3}$ see \cite[Thm.~7.10]{bezhanishviliSemilatticeBaseHierarchy2024}.
    Examples showing that the reverse implications do not hold are given in the same article:  
    7.14 provides an example of a completely regular extremally disconnected frame that is not a cozero frame (each extremally disconnected frame is obviously Oz); 
    whereas 7.7 gives an example of a completely regular frame that is cozero but not perfectly normal\footnote{Another example is given by any completely regular, non-boolean, almost boolean frame (see Remark~7.4 of the abovementioned article)}.
\end{proof}
 
 We now show that the missing condition for a perfectly regular frame to be a perfectly normal frame is exactly being a cozero frame. 

\begin{theorem}\label{PN=cozero+PR}
A completely regular frame $L$ is perfectly normal iff it is cozero and perfectly regular. 
\end{theorem}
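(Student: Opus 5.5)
The argument is short: it combines \cref{thm: L = pR L} with \cref{prop: cozero frames}, and we do each direction separately.

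For the forward direction, assume $L$ is perfectly normal, so $\Coz L = L$. Then $\Coz L$ is a frame, and hence $L$ is a cozero frame. Moreover, every $a \in L$ is itself a cozero. Therefore $a = \bigvee\setof{c\in\Coz L}{c\le a} = \bigwedge\setof{c\in \Coz L}{a\le c}$, which says that $\Coz L$ join- and meet-generates $L$. So $L$ is perfectly regular in the sense of \cref{defn: PR}. (Equivalently, $\fp(a)=a$ for all $a$, so $\pR L = L$, and we may appeal to \cref{thm: L = pR L}.)

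For the converse, assume $L$ is both a cozero frame and perfectly regular.
\begin{itemize}
\item Since $L$ is a cozero frame and completely regular, \cref{prop: cozero frames} gives $\Coz L = \pR L$.
\item Since $L$ is perfectly regular, \cref{thm: L = pR L} gives $\pR L = L$.
\end{itemize}
Combining the two, $\Coz L = L$, which is precisely perfect normality.

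There is no real obstacle here. The only point needing care is that \cref{prop: cozero frames} identifies $\Coz L$ with $\pR L$ as subsets of $L$ (the sublocale given by the fixed points of $\fp$), not merely up to isomorphism. This is exactly what condition (2) of \cref{prop: cozero frames} asserts: $\Coz L$ is the fixed-point set of $\fp$. So the equalities chain as sets, and we get $\Coz L = L$ on the nose.
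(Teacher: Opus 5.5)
Your proof is correct and follows the paper's argument: the paper reads the result off the chain $\Coz L \subseteq \pR L \subseteq L$, using \cref{prop: cozero frames} for equality in the first inclusion and \cref{thm: L = pR L} for the second. Your direct check of the forward direction (every element is a cozero, so $\Coz L$ trivially join- and meet-generates $L$) is just this observation written out.
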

\begin{proof}
We have
$\Coz L \subseteq \pR L \subseteq L$. The first containment is equality iff $L$ is a cozero frame (see \cref{prop: cozero frames}); the second containment is equality iff $L$ is perfectly regular (see \cref{thm: L = pR L}); and all are equal iff $L$ is perfectly normal, yielding the result. 
\end{proof}

\cref{PN=cozero+PR} allows us to see that being perfectly regular and cozero are independent notions. 

\begin{example} \label{Examples of PR and cozero}\
    \begin{enumerate}
        \item The Michael line is perfectly regular, but not perfectly normal (see \cref{Michael is PR}). Therefore, by \cref{PN=cozero+PR}, it is not cozero (that is, its cozeros do not form a frame).
        \item \label{item: examples-beta N} 
        For another example, let $L$ be the frame of opens of the Stone-\v Cech compactification ${\beta\NN}$ of the discrete space $\NN$. Then $L$ is extremally disconnected, so Oz, but not cozero (see \cite[Ex.~7.14]{bezhanishviliSemilatticeBaseHierarchy2024}). As we pointed out before \cref{thm: another char of PR}, $\pR L$ is a dense well-embedded sublocale of $L$. Since $L$ is Oz, it follows from \cite[Prop.~4.3.9]{avilezPointfreeStudyZembeddings2022} that $\pR L$ is coz-embedded, and hence $\Coz (\pR L) = \Coz L$. Since $\Coz L$ is not a frame, neither is $\Coz (\pR L)$, and thus $\pR L$ is not cozero. Thus, it cannot be perfectly normal by \cref{PN=cozero+PR}.
        \item The examples referred to in \cref{PN vs cozero vs Oz} which are cozero but not perfectly normal are not perfectly regular by \cref{PN=cozero+PR} (that is, the perfect regularizations are strict sublocales of their ambient frames).
        \item For another example, the Lindel\"of coreflection of any non-Lindel\"of boolean frame (for instance, the topology of an uncountable discrete space) is cozero and not perfectly normal, so not perfectly regular (again by \cref{PN=cozero+PR}).
    \end{enumerate}
\end{example}
\begin{remark}
    Let $L$ be the frame in \cref{Examples of PR and cozero}(\ref{item: examples-beta N}). 
    Since $L$ is Oz and $\pR L$ is a dense sublocale of $L$, 
    \[\fB(\pR L) =\fB L\subseteq \Coz L =\Coz (\pR L).\]
    Thus, $\pR L$ is Oz. Consequently, one cannot weaken the condition of being cozero to being Oz in \cref{PN=cozero+PR}. In other words, perfect regularity and Oz do not imply perfect normality.
\end{remark}

\section{A characterization of perfect regularity via sublocales}

In this section we give two more 
characterizations of perfect regularity, this time using the machinery of sublocales.  
For any frame $L$, let $\Ss L$ be the coframe of sublocales of $L$, which may be thought of as the lattice of generalized subspaces. Because of this, $\Ss L$ is a fundamental object of study in locale theory. An important difference between $\Ss L$ and the powerset of subspaces of a space is that $\Ss L$ is often only a coframe and not a complete boolean algebra. 
We refer to \cite[Ch.~III]{picadoFramesLocalesTopology2012} for various facts about sublocales that we use below.

For $a\in L$, we recall that the {\em open} and {\em closed} sublocales associated with $a$ are 
\[\open(a):= \setof{a \rightarrow x}{x \in L} = \setof{x\in L}{a \rightarrow x=x} \quad \text{and}\quad\close(a):=
\setof{x\join a}{x\in L} = {\uparrow}a.\]
  Here $\to$ denotes {\em relative pseudocomplement} (aka {\em Heyting implication}) in $L$  determined by 
  \[b\le a\to x \iff a\wedge b \le x\]
  for all $b \in L$. 
It is well known that $\open(a)$ and $\close(a)$ are complemented in $\Ss L$. 

Let $\close(L) := \{ \close(a) \mid a \in L \}$ and $\open(L) := \{ \open(a) \mid a \in L \}$ denote the closed and open sublocales of $L$,  respectively.
We freely use that $L\cong \open(L)\cong \close(L)^{op}$ 
(see, e.g., \cite[II.2.6]{johnstoneStoneSpaces1982} or \cite[III.6.1.5]{picadoFramesLocalesTopology2012}). In fact, the isomorphisms are obtained by $a \mapsto \open(a)$ and $a \mapsto \close(a)$.

The joins in $\close(L)$ are not the same as those in $\Ss(L)$ because arbitrary joins of closed sublocales are not closed. These can be described by

\[\bigsqcup_{i}\close(a_i)=\overline{\bigvee_{i}\close(a_i)}\] 
where $\bigsqcup$ is the join in $\close(L)$, $\bigvee$ is the join in $\Ss(L)$,  and closure denotes the smallest closed sublocale containing the join.
Similarly, \[ \bigsqcap \open(a_i)=\Int\left(\bigcap \open(a_i)\right)\]  where $\bigsqcap$ is the meet in the frame $\open (L)$ and interior denotes the largest open sublocale contained in the intersection.

We first give a characterization of when $\Coz L$ meet-generates $L$ without the assumption of complete regularity. 

\begin{proposition}\label{sublocale char of PR}
    For any frame $L$, the following are equivalent.
    \begin{enumerate}[label=\textup{(\arabic*)}]
        \item\label{item:subPR-1} $L$ is meet-generated by $\Coz L$.

        \item\label{item:subPR-2} $\close (a)=\bigvee\setof{\close(b)}{b\in \Coz L,\ a\leq b}$  for each $a\in L$.
         \item\label{item:subPR-3} $\close (a)=\bigsqcup\setof{\close(b)}{b\in \Coz L,\ a\leq b}$ for each $a\in L$.
         \item \label{item:subPR-4} $\open (a)=\bigsqcap \setof{\open(b)}{b\in \Coz L,\ a\leq b}$ for each $a\in L$.
         \end{enumerate}
\end{proposition}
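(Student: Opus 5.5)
The plan is to prove the chain \ref{item:subPR-1}$\iff$\ref{item:subPR-2}, \ref{item:subPR-1}$\iff$\ref{item:subPR-3}, \ref{item:subPR-1}$\iff$\ref{item:subPR-4}. Each one transports the meet-generation statement $a=\bigwedge\setof{b\in\Coz L}{a\le b}$ along one of the standard correspondences between $L$ and its closed or open sublocales.

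The key fact for \ref{item:subPR-1}$\iff$\ref{item:subPR-2} is the well-known identity in $\Ss L$,
\[
\bigvee_i \close(b_i)=\close\Bigl(\bigwedge_i b_i\Bigr)
\]
for any family $(b_i)$ in $L$. I would verify it directly: the join of sublocales consists of all meets of subsets of $\bigcup_i{\uparrow}b_i$. Every such meet lies above $\bigwedge_i b_i$. Conversely, each $x\ge\bigwedge_i b_i$ equals $\bigwedge_i (x\vee b_i)$ by distributivity of finite joins over arbitrary meets in the coframe sense. This is the step I expect to be the main obstacle, since it requires more than the frame law. If that distributivity is unavailable, I would instead quote the standard result (Picado--Pultr, III.6.1) that arbitrary joins of closed sublocales are closed, namely $\bigvee\close(b_i)=\close(\bigwedge b_i)$.

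Granting the identity, \ref{item:subPR-2} reads $\close(a)=\close(\bigwedge\setof{b\in\Coz L}{a\le b})$. Since $a\mapsto\close(a)$ is injective (indeed an order anti-isomorphism $L\cong\close(L)^{op}$), this is equivalent to $a=\bigwedge\setof{b\in\Coz L}{a\le b}$, which is \ref{item:subPR-1}.

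For \ref{item:subPR-3}, I use the anti-isomorphism $L\cong\close(L)^{op}$: joins in $\close(L)$ correspond to meets in $L$, so $\bigsqcup\close(b_i)=\close(\bigwedge b_i)$. Then \ref{item:subPR-3} is again equivalent to \ref{item:subPR-1} by injectivity. As a consistency check, the identity above shows that $\bigsqcup$ and $\bigvee$ agree here, so \ref{item:subPR-2}$\iff$\ref{item:subPR-3} is immediate as well. For \ref{item:subPR-4}, the isomorphism $a\mapsto\open(a)$ of $L$ onto $\open(L)$ carries meets in $L$ to meets $\bigsqcap$ in the frame $\open(L)$. So $\bigsqcap\open(b_i)=\open(\bigwedge b_i)$, and \ref{item:subPR-4} is equivalent to $\open(a)=\open(\bigwedge\setof{b\in\Coz L}{a\le b})$, hence to \ref{item:subPR-1} by injectivity of $\open$. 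None of this uses complete regularity, in line with the statement.
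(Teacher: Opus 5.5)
Your handling of (3) and (4) through the isomorphisms $L\cong\close(L)^{op}$ and $L\cong\open(L)$ is correct, and it matches the paper. The gap is in (1)$\implies$(2). The identity $\bigvee_i\close(b_i)=\close(\bigwedge_i b_i)$ in $\Ss L$ is false for infinite families, and the paper says so explicitly just before the proposition. Closed sublocales are closed under finite joins and arbitrary intersections, but not under arbitrary joins. No reference can supply your fallback citation.

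Only the inclusion $\subseteq$ holds in general. Your direct verification fails at exactly the step you were worried about: $x=\bigwedge_i(x\vee b_i)$ for $x\ge\bigwedge_i b_i$ is a coframe law, and frames do not satisfy it. Concretely, in $\fO\RR$ put $b_q=\RR\setminus\{q\}$ for $q\in\QQ$. Then $\bigwedge_q b_q=\varnothing$, so $\close(\bigwedge_q b_q)$ is all of $\fO\RR$. However, the meets of subsets of $\bigcup_q{\uparrow}b_q$ are exactly the sets $\RR\setminus\overline{A}$ with $A\subseteq\QQ$, and $\RR\setminus\{\sqrt2\}$ is not of that form. So you cannot rewrite (2) as $\close(a)=\close(\bigwedge\setof{b\in\Coz L}{a\le b})$. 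Your ``consistency check'' that $\bigsqcup$ and $\bigvee$ agree fails for the same reason.

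The missing idea is to use meet-generation at every $d\ge a$, not just at $a$. Take $d\in\close(a)$. By (1), $d=\bigwedge\setof{c\in\Coz L}{d\le c}$. Each such $c$ satisfies $a\le c$, so $c$ lies in $\close(c)$, which is one of the sublocales in the family. Hence $d$ is a meet of elements of the union, and so it lies in the $\Ss L$-join. This is the paper's argument, using the description of joins in $\Ss L$ as all meets of subsets of the union.

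The converse (2)$\implies$(1) survives using only the valid inclusion: $\close(a)=\bigvee\close(b)\subseteq\close(\bigwedge b)$ gives $\bigwedge b\le a$. Also, (2)$\implies$(3) holds because $\close(a)$ is itself closed. An $\Ss L$-join that happens to be closed is therefore the join in $\close(L)$; this has nothing to do with the two joins agreeing in general.
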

\begin{proof}
    \ref{item:subPR-1}$\implies$\ref{item:subPR-2}: Let $a \in L$ and set $S = \bigvee\setof{\close(b)}{b\in \Coz L,\ a\leq b}$. We must show that $S=\close (a)$. From $a \le b$ it follows that $\close(b) \subseteq\close(a)$, so $S \subseteq \close(a)$. For the reverse inclusion, let $d \in \close(a)$, so $d \ge a$. By the formula for joins in $\Ss(L)$ (see, e.g., \cite[III.3.2]{picadoFramesLocalesTopology2012}), 
    we have
    \begin{equation}\label{join}
        S = \left\{\bigwedge A\mid A\subseteq \bigcup\setof{\close(b)}{b\in \Coz L,\ a\leq b}\right\}.\tag{$\ast$}
    \end{equation}
    By assumption,
        $d=\bigwedge\setof{c\in \Coz L}{d\leq c}$.
        For each such $c$, $a\leq c$, so 
    $c\in \bigcup\setof{\close(b)}{b\in \Coz L,\ a\leq b}$. Therefore, $\setof{c\in \Coz L}{d\leq c}\subseteq \bigcup\setof{\close(b)}{b\in \Coz L,\ a\leq b}$, and hence $\bigwedge\setof{c\in \Coz L}{d\leq c}\in S$ by \eqref{join}, yielding that $d \in S$. Thus, $\close (a) \subseteq S$, hence the equality.

    \ref{item:subPR-2}$\implies$\ref{item:subPR-3}: This is obvious.

    \ref{item:subPR-3}$\iff $\ref{item:subPR-1}:
        Since $a \mapsto \close(a)$ is an isomorphism of $ L$ and $\close(L)^{op}$,
        \begin{align*}
            a=\bigwedge \setof{b\in \Coz L}{ a\leq b } \iff \close (a) &= \bigsqcup \setof{\close (b)}{b\in \Coz L,\   a\leq b  }. 
        \end{align*}

\ref{item:subPR-4}$\iff $\ref{item:subPR-1}:
     Since $a \mapsto \open(a)$ is an isomorphism of $L$ and $\open (L)$,
        \begin{align*}
            a=\bigwedge \setof{b\in \Coz L}{a\leq b } \iff \open (a) &= \bigsqcap \setof{\open (b)}{ b\in \Coz L,\   a\leq b  }. 
            \qedhere
        \end{align*}
\end{proof}

If $L$ is completely regular, then 
$\Coz L$ is join-generating in $L$. Thus, as an immediate consequence of  \cref{sublocale char of PR}, we obtain:

\begin{corollary}\label{sublocale char of PR - opens}
    For a completely regular frame $L$, the following are equivalent.
    \begin{enumerate}[label=\textup{(\arabic*)}]
        \item\label{item:subPRop-1}
        $L$ is perfectly regular.
        \item\label{item:subPRop-2a} $\close (a)=\bigvee\setof{\close(b)}{b\in \Coz L,\ a\leq b}$  for each $a\in L$.
         \item\label{item:subPRop-3a} $\close (a)=\bigsqcup \setof{\close(b)}{b\in \Coz L,\ a\leq b}$ for each $a\in L$.
         \item\label{item:subPRop-3} $\open (a)=\bigsqcap\setof{\open(b)}{b\in \Coz L,\ a\leq b}$ for each $a\in L$.
    \end{enumerate}
\end{corollary}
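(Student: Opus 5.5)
The corollary follows by combining \cref{defn: PR} with \cref{sublocale char of PR}. By \cref{defn: PR}, $L$ is perfectly regular exactly when $\Coz L$ both join-generates and meet-generates $L$. Since $L$ is assumed completely regular, $\Coz L$ already join-generates $L$, as recalled just before the statement. So for such $L$, condition \ref{item:subPRop-1} is equivalent to $\Coz L$ meet-generating $L$, which is condition \ref{item:subPR-1} of \cref{sublocale char of PR}.

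Next, conditions \ref{item:subPRop-2a}, \ref{item:subPRop-3a}, and \ref{item:subPRop-3} are verbatim conditions \ref{item:subPR-2}, \ref{item:subPR-3}, and \ref{item:subPR-4} of \cref{sublocale char of PR}. That proposition proves these conditions equivalent to \ref{item:subPR-1} for an arbitrary frame, with no regularity hypothesis. The chain of equivalences therefore transfers directly, and the proof reduces to observing that complete regularity supplies the join-generation half of perfect regularity.

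There is no real obstacle here. The only point to check is that the equivalence ``complete regularity $\iff$ $\Coz L$ join-generates $L$'' is used in the correct direction. Only the implication from complete regularity to join-generation is needed, and it holds because every element of a completely regular frame is a join of cozero elements below it.
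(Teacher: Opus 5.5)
Your proposal is correct and matches the paper's argument. Complete regularity already gives that $\Coz L$ join-generates $L$, so perfect regularity reduces to meet-generation, and the remaining conditions are then exactly those of \cref{sublocale char of PR}, which holds for an arbitrary frame.
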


\begin{remark}\label{rem: failure of intersection}    
 For perfectly normal frames, a strengthening of condition \ref{item:subPRop-3} holds trivially, namely:

 \[\open (a)= \bigcap\setof{\open(b)}{b\in \Coz L,\ a\leq b}\text{ for each }a\in L.\]
We show that this is no longer the case for perfectly regular frames. More precisely, we show that the frame of opens of the Michael line 
does not satisfy it. 
In fact, we show that
\[\open(\PP) \ne \bigcap\setof{\open(C) }{C\in \Coz \MM,\ \PP\subseteq C}.\]
First observe that $\PP \notin \open(\PP)$ since $\PP \to \PP = \MM \ne \PP$. On the other hand, we show that $\PP \in \open(C)$ for each $C\in \Coz \MM$ such that $\PP\subseteq C$. To see this, let $C$ be a cozero of $\MM$ containing $\PP$. Then $A: = \MM \setminus C$ is a zero set of $\MM$, so it is $G_\delta$ in $\MM$. Since $A \subseteq \QQ$, it follows from the definition of the topology of $\MM$ that $A$ is also a  $G_\delta$-set in $\RR$. 

We show that $A$ is scattered. (Recall that a space is {\em scattered} if each nonempty subspace has an isolated point.)
Use the Cantor-Bendixson Theorem (see, for instance, \cite[1.7.10]{engelkingGeneralToplogy1989}) to write $A$ as the disjoint union $A = D \cup S$, where $D$ is a dense-in-itself closed subspace of $A$ and $S$ is an open scattered subspace of $A$. It is sufficient to show that $D = \varnothing$. Suppose otherwise, then $D$ is a denumerable dense-in-itself subspace of $\RR$, so is homeomorphic to $\QQ$ (see, e.g., \cite[6.2.A(d)]{engelkingGeneralToplogy1989}). Since $A$ is $G_\delta$ in $\RR$, $A= \bigcap_{n=1}^\infty U_n$, where each $U_n$ is open in $\RR$. Now 
\[D = A \setminus S = \bigcap_{n=1}^\infty U_n \cap \bigcap_{s \in S} (\RR\setminus\{s\}),\]
and because $S$ is countable, $D$ is a $G_\delta$ in $\RR$, contradicting that $\QQ$ is not $G_\delta$ in $\RR$ (the latter being a consequence of the Baire Category theorem; see, e.g., \cite[3.9.B]{engelkingGeneralToplogy1989}). Thus, $D = \varnothing$, so $A=S$, and hence $A$ is scattered.  

Now, since $A$ is scattered, $\Int(A \cup \PP) = \PP$:
if $x \in \Int(A \cup \PP)$ and $x \notin \PP$ then there is an open interval $I_x$ such that $I_x \subseteq A \cup \PP$, so $I_x \setminus \PP \subseteq A$. Because $I_x \setminus \PP$ is dense-in-itself and $A$ is scattered, $I_x$ must be empty. Thus, $x\in \PP$.
Consequently, since the relative pseudocomplement in the frame of opens of a topological space $X$ is calculated by $$U \to V = \Int((X\setminus U)\cup V),$$ we obtain that $C \to \PP = \Int(A \cup \PP) = \PP$, which shows $\PP\in \open (C)$.
\end{remark}

\begin{remark}\label{Bool of michael is open}
    By \cref{booleanization of M}, $\fB(\Ml) \cong \mathcal{P}\PP$. The latter is the frame ${\downarrow}\PP = \{ U \in \Ml \mid U \subseteq \PP \}$ whose corresponding sublocale is $\open (\PP)$ (see, e.g., \cite[III.6.1.1.]{picadoFramesLocalesTopology2012}). Thus, the booleanization of $\Ml$ is the open sublocale $\open (\PP)$\footnote{This in itself is interesting as the booleanization is seldom an open sublocale.}, and hence $\fB(\Ml) \ne \bigcap\setof{\open(C) }{C\in \Coz \MM,\ \PP\subseteq C}$ by \cref{rem: failure of intersection}. 
\end{remark}

The intersection defined in the remark above is clearly a closure type operation and thus leads to the following.
\begin{definition}
  For any sublocale $S$ of $L$, the  
 \emph{coz-closure}\footnote{Dube in \cite[Rem.~3.3]{dubeNoteWeaklyPseudocompact2017} calls it the $\delta$-closure -- our new name seems more fitting as is made clear in \cref{rem: Mrowka}.} of $S$ is defined to be the intersection of all cozero sublocales containing $S$; that is, 
\[\overline{S}^{\,{coz}} = \bigcap \setof{\open(a)}{a \in \Coz L,\, S \subseteq \open(a)}.\]
\end{definition}

As an immediate consequence of \cref{sublocale char of PR - opens}, we obtain our final characterization of perfect regularity.
\begin{corollary} 
A completely regular frame $L$ is perfectly regular iff every open sublocale is the interior of its $coz$-closure; that is, $\open (a) = \Int \left(\overline {\open (a)}^{\,coz}\right)$ for each $a \in L$.
\end{corollary}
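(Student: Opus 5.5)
The plan is to reduce the corollary to condition \ref{item:subPRop-3} of \cref{sublocale char of PR - opens}, which characterizes perfect regularity of a completely regular $L$ by
\[\open(a)=\bigsqcap\setof{\open(b)}{b\in\Coz L,\ a\le b}\quad\text{for each } a\in L.\]
It then suffices to identify the right-hand side with $\Int\bigl(\overline{\open(a)}^{\,coz}\bigr)$.

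The first step is to rewrite the index set of the coz-closure. Since $a\mapsto\open(a)$ is an order isomorphism $L\cong\open(L)$, we have $\open(a)\subseteq\open(b)$ iff $a\le b$. Therefore
\[\overline{\open(a)}^{\,coz}=\bigcap\setof{\open(b)}{b\in\Coz L,\ a\le b},\]
where the intersection is the meet in $\Ss L$.

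The second step uses the formula recalled before \cref{sublocale char of PR}: meets in the frame $\open(L)$ are computed as $\bigsqcap\open(a_i)=\Int\bigl(\bigcap\open(a_i)\bigr)$. Applying it to the family above gives
\[\bigsqcap\setof{\open(b)}{b\in\Coz L,\ a\le b}=\Int\bigl(\overline{\open(a)}^{\,coz}\bigr).\]
Substituting this into condition \ref{item:subPRop-3} yields exactly the condition in the statement. Both directions of the ``iff'' then follow from \cref{sublocale char of PR - opens}.

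There is no real obstacle here. The only point to check is that the interior formula for meets in $\open(L)$ is valid as stated, meaning that the largest open sublocale contained in the intersection is the $\open$ of the meet in $L$. This holds because $\open$ is an order isomorphism onto $\open(L)$, so the meet in $\open(L)$ is $\open\bigl(\bigwedge b\bigr)$. Moreover, $\open(c)\subseteq\bigcap\open(b)$ iff $c\le b$ for all $b$, iff $c\le\bigwedge b$. I will also note the empty-family case: when there is no such $b$, the convention is that the intersection and its interior are $L$ itself. This case does not actually occur, since $1\in\Coz L$.
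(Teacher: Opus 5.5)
Your proposal is correct and is essentially the paper's argument. The paper also obtains the corollary directly from condition \ref{item:subPRop-3} of \cref{sublocale char of PR - opens}, by combining $\open(a)\subseteq\open(b)\iff a\le b$ with the formula $\bigsqcap\open(a_i)=\Int\left(\bigcap\open(a_i)\right)$; your extra checks (that the interior formula holds, and that the index set is nonempty because $1\in\Coz L$) fill in details the paper leaves implicit.
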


\begin{remark}\label{rem: Mrowka}
    In spaces, coz-closure is closely related to Mrowka's notion of $Q$-closure \cite{mrowkaPropertiesQspaces1957} (aka realclosure \cite[Defn.~5.13]{aloNormalTopologicalSpaces1974} or $G_\delta$-closure \cite[p.~44]{blairExtensionsZerosetsRealvalued1974}): for a set $A$ in a space $X$, define the {\em $Q$-closure} of $A$ as the set of points $x \in X$ such that for any $G_\delta$-set $G$ containing $x$ we have $A\cap G\neq \varnothing$. It is not hard to see that the $Q$-closure coincides with 
\[\bigcap\setof{F}{A\subseteq F, F \text{ is an}\ F_\sigma\text{-set}}.\]
In a completely regular (Hausdroff) space $X$ the $Q$-closure may be described using cozeros; that is, 
\[\bigcap\setof{C}{A\subseteq C, C \in \Coz X}\]
(see comment before \cite[Cor.~2.6]{blairExtensionsZerosetsRealvalued1974}). This is the spatial version of the coz-closure defined above.

In a locale $L$, Mrowka's closure would translate to: 
\[\overline{S}^{\,{\delta}} = \bigcap\setof{F \in \Ss L}{ S \subseteq F, \, F\text{ is an }F_\sigma\text{-sublocale}    },\]
where a sublocale is $F_\sigma$ if it is a countable join of closed sublocales (see, e.g, \cite[p.~103]{gutierrezgarciaPointfreeFormsDowkers2009}). Recall that $\Ss L$ is a coframe but not necessarily a complete boolean algebra, so the relationship between the various subsets is more complicated. 
In view of this, it of interest to investigate whether in a completely regular locale the coz-closure coincides with the one defined above. 
\end{remark}

\section{Conclusions}



It is a celebrated result in locale theory, known as Isbell's density theorem, that each locale has a smallest dense sublocale, which is always boolean (the so-called booleanization). It provides an important motivation to study sublocales (rather than only subspaces) of a topological space since the booleanization is rarely spatial. In the setting of complete regularity, cozeros play a pivotal role, but this information 
is lost when moving to the booleanization (unless we are in the special case of almost P-locales).
The perfect regularization introduced in this paper remedies this by studying another dense sublocale, which retains all the information provided by the cozeros. We find it remarkable that this sublocale can be described using the Dedekind-MacNeille completion of the cozeros, which in turn is isomorphic to their injective hull. These equivalent ways of thinking about perfect regularization have been crucial in obtaining our main results. 

We view this paper as the launching pad for the study of perfect regularity, a fruitful yet overlooked higher separation axiom for both spaces and locales. Our considerations give rise to numerous questions and directions for further study, some of which we detail below.

\begin{enumerate}
    \item Like booleanization, perfect regularization is not always functorial. It is of interest to determine the conditions under which it becomes a functor.
    \item Again like booleanization, perfect regularization is not always spatial. It is natural to seek necessary and sufficient conditions for perfect regularization of a spatial frame to be spatial.
    \item While we defined perfect regularization for frames, perfect regularization for completely regular spaces still needs to be developed. 
    
    \item As we saw in \cref{rem: failure of intersection}, the condition that $\open (a)=\bigcap\setof{\open(b)}{ b\in \Coz L,\ a\leq b} $ for each $a\in L$ is an immediate consequence of perfect normality, but is strictly stronger than perfect regularity. It is of interest to determine its exact relationship to these two notions.     

   \item For a completely regular frame $L$, let $\fH L = \bigcap\setof{\open(b)}{b \mbox{ is a dense cozero}}$ be the ``cozero-hollowing'' of $L$. As was demonstrated in \cite{ballLhollowFramesLrepletions2024}, $\fH L$ plays an important role in the scaffolding of a completely regular frame. 
     The argument in \cref{Bool of michael is open} shows that 
     for perfectly regular frames, the cozero-hollowing does not always coincide with 
     the booleanization. 
     It is natural to study the exact relationship between $\pR L$ and $\fH L$.
   
\item It would be interesting to explore the idea of ``perfect regularity'' with respect to other bases, using the machinery of S-bases of \cite{bezhanishviliSemilatticeBaseHierarchy2024}.

 \item The notion of perfect normality was refined to that of $\kappa$-perfect normality in \cite[p.~25]{ballLhollowFramesLrepletions2024}. It would be natural to seek the same $\kappa$-refinement for our
 notion of perfect regularity.  
 
\end{enumerate}

\section*{Acknowledgments}

We are grateful to Alan Dow, K.~P.~Hart, and Jan van Mill for useful discussions.

\end{document}